\numberwithin{equation}{section}
\newtheorem{theorem}{Theorem}[section]
\newtheorem{lemma}[theorem]{Lemma}
\newtheorem{proposition}[theorem]{Proposition}
\newtheorem{corollary}[theorem]{Corollary}
\theoremstyle{definition}
\newtheorem{definition}[theorem]{Definition}
\theoremstyle{remark}
\newtheorem{remark}[theorem]{Remark}
\newcommand{\hR}{\,\!^\ast\R}
\newcommand{\FI}{\mathrm{FI}}
\newcommand{\M}{\mathrm{M}}
\newcommand{\I}{\mathrm{I}}
\newcommand{\NN}{\mathrm{N}}
\newcommand{\AAA}{\mathrm{A}}
\newcommand{\R}{\mathbb{R}}
\newcommand{\Q}{\mathbb{Q}}
\newcommand{\E}{\mathbb{E}}
\newcommand{\F}{\mathbb{F}}
\newcommand{\abf}{\mathbf{a}}
\newcommand{\infinitesimals}{\oslash}
\newcommand{\finite}{\pounds}
\newcommand{\infinite}{\finite^c}
\newcommand{\modelR}{\widehat\R}
\begin{document}

\title{Flexible involutive meadows}
  \author{Emanuele Bottazzi and Bruno Dinis}

\maketitle

\begin{abstract}
We investigate a notion of inverse for neutrices inspired by Van den Berg and Koudjeti's decomposition of a neutrix as the product of a real number and an idempotent neutrix. We end up with an algebraic structure that can be characterized axiomatically and generalizes involutive meadows. The latter are algebraic structures where the inverse for multiplication is a total operation. 
 As it turns out, the structures satisfying the axioms of flexible involutive meadows are of interest beyond nonstandard analysis.
\end{abstract}

\section{Introduction}

Neutrices and external numbers (which can be seen as translations of neutrices over the hyperreal line) were introduced by Van den Berg and Koudjeti in \cite{KoudjetiBerg(95)} as models of uncertainties, in the context of nonstandard analysis,  and further developed in \cite{koudjetithese,IBerg(10),DinisBerg(11),DinisBerg(ta),DinisBerg(book)}. Neutrices were named after and inspired by Van der Corput's groups of functions \cite{Corput(60)} in an attempt to give a mathematically rigorous formulation to the art of neglecting small quantities -- \emph{ars negligendi}.

One of the long-standing open questions in the theory of external numbers is the definition of a suitable notion of inverse of a neutrix. For zeroless external numbers, that is, external numbers that don't contain $0$ and therefore cannot be reduced to a neutrix, there is a notion of inverse, defined
from the Minkowski product between sets (see Definition \ref{def quotient} below), but this cannot be generalized to neutrices.

A meadow (see Sections \ref{S:involutive meadows} and \ref{S:common meadows} below for further details) is a commutative ring with a multiplicative identity element and a total multiplicative inverse operation. Two of the main classes of meadows are involutive meadows, where the inverse of zero is defined to be zero, and common meadows, that instead introduce an error term that propagates through calculations \cite{Bergstra2015}.

One of the motivations for the study of structures where the inverse of zero is defined comes from equational theories \cite{10.1145/1219092.1219095,Komori1975FreeAO,10.2969/jmsj/03520289}. For instance, Ono and Komori introduced such structures motivated from the algebraic study of equational theories and universal theories of fields, and free algebras over all fields, respectively. A long-standing result by Birkhoff states that algebraic structures with an equational axiomatization -- namely, whose axioms only involve equality, besides the functions and constants of the structure itself -- are closed under substructures. Algebraic structures where the inverse is defined only for nonzero elements are not equational, since they have to use inequalities or quantifiers in their definition of a multiplicative inverse. Instead, involutive meadows and common meadows, that define the inverse of zero as zero or a new error term, respectively, admit equational axiomatizations.

Equational axiomatizations of meadows based on known algebraic structures, such as $\Q$ and $\R$, are also of interest to computer science. According to Bergstra and Tucker \cite{10.1145/1219092.1219095}, such equational axiomatizations allow for simple term rewriting systems and are easier to automate in formal reasoning.

Another motivation for the study of meadows is a philosophical
interest in the definition of an inverse of zero (see e.g. \cite[Section 3]{BERGSTRA2011203}). If one wants to assign a meaning to expressions such as $0^{-1}$ or $1/0$ (Bergstra and Middleburg argue that, in principle, these two operations need to be distinguished \cite{BERGSTRA2011203}), the theory of meadows allows for two main options: $(i)$ involutive meadows which define $0^{-1}=0$, resulting into an equational theory closer to that of the original structure, $(ii)$ common meadows which define $0^{-1}$ as a new error term that propagates through calculations.


It turns out that external numbers are particularly suitable for expressing the kind of concepts involved in the definition of the inverse of zero. The key insight is that, being convex subgroups of the hyperreal numbers (i.e.\ the extension of the real number system which includes nonstandard elements such as infinitesimals), neutrices are ``error'' terms, which can also be seen as generalized zeros and are therefore suitable to build models of meadows. The fact that one is using hyperreals (or other non-archimedean field extensions of the real numbers) is crucial, because the real numbers only have two convex subgroups: $\{0\}$ and $\R$, while in the context of the hyperreals there are countably infinitely many, e.g.\ the set of all infinitesimals -- denoted $\oslash$  -- and the set of all limited numbers -- denoted $\pounds$ (see the examples after Definition~\ref{def neutrix and inf}). In turn, external numbers are of the form $a+A$, where $a$ is a hyperreal number and $A$ is a neutrix and can therefore be seen as translations of neutrices. According to the interpretation of neutrices as error terms or generalized zeroes, external numbers can be interpreted as expressing a quantity with a degree of uncertainty.

By introducing an alternative way to define the inverse of a neutrix, inspired by a result by Van den Berg and Koudjeti \cite{KoudjetiBerg(95)} stating that every neutrix can be decomposed as the product of an hyperreal number and an idempotent neutrix, we end up with an algebraic structure that can be characterized axiomatically and generalizes involutive meadows.  Since the new class of structures involves error terms, we call it \emph{flexible involutive meadow}, in the spirit of \cite{JustinoBerg(13)}.

\section{Preliminary notions}

In this section we recall the axioms of common meadows and involutive meadows, with some comments, as well as some background on the external numbers.

\subsection{Involutive meadows}\label{S:involutive meadows}
The axioms of involutive meadows are listed in Figure~\ref{tab:ax_involutive_meadows}.

\begin{figure}[h!]
	\begin{tabular}{l c c r}
		\toprule
		\\[-2mm]
		$(\I_1)$ &\qquad \qquad \qquad&$(x+y)+z=x+(y+z) $  & \qquad \qquad \\[2mm]
		$(\I_2)$ && $x+y=y+x $ \\[2mm]
		$(\I_3)$ && $x+0=x$  \\[2mm]
		$(\I_4)$ && $x+ (-x)=0$ \\[2mm]
		$(\I_5)$ && $(x \cdot y) \cdot z=x \cdot (y \cdot z)$  \\[2mm]
		$(\I_6)$ && $x \cdot y=y \cdot x $ \\[2mm]
		$(\I_7)$ && $1 \cdot x=x$ \\[2mm]
		$(\I_8)$ && $x \cdot (y+z)= x \cdot y + x \cdot z$ \\[2mm]
		$(\I_9)$ && $(x^{-1})^{-1}=x$ \\[2mm]
		$(\I_{10})$ && $x \cdot (x \cdot x^{-1})=x$\\[2mm]
		\toprule
	\end{tabular}
	\caption{Axioms for involutive meadows}
	\label{tab:ax_involutive_meadows}
\end{figure}

The term \emph{involutive} refers to the fact that taking inverses is an involution, as postulated by axiom $(\I_9)$.
With the exception of axiom $(\I_{10})$, the remaining axioms are quite standard, as they postulate the existence of operations of addition $+$ and multiplication $\cdot$ which are associative, commutative, admit a neutral element (denoted $0$ and $1$ respectively). Furthermore, there is an inverse for addition and multiplication is distributive with respect to addition. 

Axiom $(\I_{10})$ replaces the more usual $x \cdot x^{-1} = 1$, which is false for $x = 0$.
This hints at the fact that one should not take the definition of $a^{-1}$ as the number satisfying $a \cdot a^{-1} = 1$. This also ties in with rejecting division as the ``inverse'' of multiplication. 

\subsection{Common meadows}\label{S:common meadows}
The axioms of common meadows are listed in Figure \ref{tab:abc} (see also  \cite{Bergstra2015}).

\begin{figure}[h!]
\begin{tabular}{l c c r}
\toprule
\\[-2mm]
$(\M_1)$ &\qquad \qquad \qquad&$(x+y)+z=x+(y+z) $  & \qquad \qquad \\[2mm]
$(\M_2)$ && $x+y=y+x $ \\[2mm]
$(\M_3)$ && $x+0=x$  \\[2mm]
$(\M_4)$ && $x+ (-x)=0 \cdot x$ \\[2mm]
$(\M_5)$ && $(x \cdot y) \cdot z=x \cdot (y \cdot z)$  \\[2mm]
$(\M_6)$ && $x \cdot y=y \cdot x $ \\[2mm]
$(\M_7)$ && $1 \cdot x=x$ \\[2mm]
$(\M_8)$ && $x \cdot (y+z)= x \cdot y + x \cdot z$ \\[2mm]
$(\M_9)$ && $-(-x)=x$ \\[2mm]
$(\M_{10})$ && $x \cdot x^{-1}=1 + 0 \cdot x^{-1}$\\[2mm]
$(\M_{11})$ && $(x \cdot y)^{-1} = x^{-1} \cdot y^{-1}$\\[2mm]
$(\M_{12})$ && $(1 + 0 \cdot x)^{-1} = 1 + 0 \cdot x $\\[2mm]
$(\M_{13})$ && $ 0^{-1}=\abf$\\[2mm]
$(\M_{14})$ && $x+ \abf = \abf $\\[2mm]
\toprule
\end{tabular}
\caption{Axioms for common meadows}
\label{tab:abc}
\end{figure}

As with involutive meadows, some of the axioms are quite standard (namely $(\M_1)-(\M_3)$, $(\M_5)-(\M_7)$, $(\M_8)$, $(\M_9)$, and $(\M_{11})$), as they postulate the existence of operations of addition $+$ and multiplication $\cdot$ which are associative, commutative and admit a neutral element (denoted $0$ and $1$ respectively). Furthermore, there is an inverse for addition, multiplication is distributive with respect to addition and the inverse of the product of two elements is the product of the inverses.

Axiom $(\M_4)$ postulates the existence of a sort of additive inverse for every element $x$ but with the caveat that the result of operating an element with its inverse is not the neutral element $0$ but $0 \cdot x$.

Axioms $(\M_{10})$ and $(\M_{12})$ concern further properties of the inverse for multiplication. The novelty, compared with more familiar settings, is that they have ``error'' terms in the form of the product of an element $x$, (respectively, its inverse $x^{-1}$) by $0$.

Axiom $(\M_{13})$ defines $0^{-1}$ as an "error term" $\abf$ that does not belong to the initial structure. Due to the presence of this error term, the result of $x \cdot x^{-1}$ is defined as $1 + 0 \cdot x^{-1}$. If $x \ne 0$ and $x \ne \abf$, then $0 \cdot x^{-1} = 0$ (see \cite[Proposition 2.3.1]{Bergstra2015}) and we recover the usual result that holds in a field. If $x = 0$ or $x = \abf$, then the additional term $0 \cdot x^{-1}$ is equal to $\abf$.

Axiom $(\M_{12})$ has a similar motivation: if $x \ne \abf$, then we recover that the inverse of $1$ is $1$. If $x = \abf$, then we get that the inverse of $\abf$ is $\abf$ itself.

\subsection{External numbers}

Let us recall some definitions and results about neutrices and external numbers. We will use  $\hR$ to denote an extension of the real number system that includes nonstandard elements -- such as infinitesimals -- \cite{Goldblatt(98)}, and $\R$ to denote the usual set of real numbers.\footnote{Note that this notation differs from the usual presentations of external numbers, according to which $\R$ already contains nonstandard elements.}

\begin{definition}[Neutrices]
	\label{def neutrix and inf}
	A \emph{neutrix} is a convex subset of $\hR$ that is a subgroup for addition.

\end{definition}

	Some simple examples of neutrices are:
	\begin{itemize}
		\item $\infinitesimals = \{x \in \hR : x \simeq 0\}$;
		\item $\finite = \{x \in \hR : x \text{ is finite}\}$;
		\item if $\varepsilon \simeq 0$, $\varepsilon \finite =  \left\{x \in \hR : \frac{x}{\varepsilon} \text{ is finite}\right\}$.
	\end{itemize}

We are using the notation $x \simeq 0$ to say that $x$ is \emph{infinitesimal}. We will also write $x \simeq y$, and say that $x$ is \emph{infinitely close} to $y$, if $x-y \simeq 0$.

\begin{definition}[External numbers]\label{def ext numbers}
	An \emph{external number} $\alpha$ is the sum of a hyperreal number $a$ and a neutrix $A$ in the following sense:
	$$
		\alpha = a + A = \{ a+r : r\in A \}.
	$$
	
	An external number $\alpha = a + A$ that is not reduced to a neutrix (equivalently, such that $0 \not \in \alpha$) is said to be \emph{zeroless}.
\end{definition}

The sum and product of external numbers 
is introduced in the following definition. We refer to \cite{DinisBerg(book)} for their properties. We will however recall some of them which will be useful later on.

\begin{definition}\label{def minkowski}
	For $a, b\in \hR$ and $A, B \subseteq \hR$ (not necessarily neutrices), we define the Minkowski sum and product
	\begin{align*}
		(a+A)+(b+B) & = (a+b)+(A+B)\\
		(a+A)\cdot(b+B) & = ab+ aB+ bA +AB,
	\end{align*}
	where
	\begin{align*}
		A+B & = \{x+y : x\in A \land y \in B\}\\
		aB & = \{ay :  y \in B\}\\
		AB & = \{xy : x\in A \land y \in B\}.
	\end{align*}
\end{definition}

It is also possible to define a notion of division between subsets of hyperreal numbers, even if they contain $0$.
 
\begin{definition}\label{def quotient}
	For $A, B \subseteq \hR$ (not necessarily neutrices), we define
	$$
		{A}\cdot{B}^{-1} = \{ x : xB \subseteq A \}.
	$$
\end{definition}

Usually, the operation ${A}\cdot{B}^{-1}$ is written as $\frac{A}{B}$. Here we chose the inverse notation since we are investigating structures related to meadows, whose axioms are commonly stated in terms of the inverse operation. For further discussion on the use of these operations, see \cite{BERGSTRA2011203}.

Notice that Definition \ref{def quotient} doesn't allow us to obtain a \emph{proper} inverse of a neutrix. In fact, if $A = \{1\}$ and $B$ is a neutrix, ${A}\cdot{B}^{-1}$ is empty, since for no $x$ we have $0x\in A$. This example motivated us to look for alternative definitions of inverses of a neutrix.


If $x=a+A$ is zeroless, then ${a}^{-1}\cdot{A} \subseteq A$, and therefore ${a}^{-1}\cdot{A}+A=A$. In fact, ${a}^{-1}\cdot{A}\subseteq \infinitesimals$.


\begin{proposition}[{\cite[Theorem 1.4.2]{DinisBerg(book)}}]\label{prop inverse}
	Let $x = a + A$ be a zeroless external number. Then
	$${x}^{-1} = (a+A)^{-1} = {a}^{-1}+{a}^{-2}\cdot{A}.$$
\end{proposition}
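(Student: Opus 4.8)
The plan is to use the definition of the set-theoretic inverse (Definition~\ref{def quotient}) directly, together with the structural facts about zeroless external numbers recalled just above the statement, namely that for $x = a+A$ zeroless one has $a^{-1}A \subseteq \oslash \subseteq A$ and hence $a^{-1}A + A = A$. Throughout I will freely use that $a \neq 0$ (since $x$ is zeroless, in particular $a \notin A$, so $a$ cannot be $0$ as $0 \in A$ always holds for a neutrix), so that $a^{-1}$ and $a^{-2}$ make sense as hyperreals.

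First I would unwind what $x^{-1}$ means: by Definition~\ref{def quotient}, $x^{-1} = (a+A)^{-1} = \{\, t \in \hR : t(a+A) \subseteq a+A \,\}$. So the goal is to show that this set equals $a^{-1} + a^{-2}A$. I would prove the two inclusions separately.

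For the inclusion $a^{-1} + a^{-2}A \subseteq x^{-1}$: take $t = a^{-1} + a^{-2}r$ with $r \in A$, and compute $t(a+A) = (a^{-1}+a^{-2}r)(a+A) = 1 + a^{-1}A + a^{-1}r + a^{-2}rA$. Now $a^{-1}r \in a^{-1}A \subseteq A$, and $a^{-2}rA$ is contained in $a^{-1}A$ (again because $a^{-1}r$ is infinitesimal, so $a^{-1}rA \subseteq A$, hence $a^{-2}rA = a^{-1}(a^{-1}rA) \subseteq a^{-1}A$), and $a^{-1}A + a^{-1}A = a^{-1}A \subseteq A$. So $t(a+A) \subseteq 1 + A = a\cdot a^{-1} + A$; I still need this to land inside $a + A$, but that requires $t(a+A) \subseteq a+A$, i.e. $1 + A \subseteq a + A$ — which is false in general. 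So the naive bound is too crude: I must instead show $t(a+A) = a^{-1}\cdot a + (\text{stuff in } A) = 1 + (\text{something})$, and note the correct target is $a+A$, not the value $1$; but $t \cdot x$ need only be a subset of $a+A$, and $1 \notin a+A$ unless $a \simeq 1$. Hence the definition of $(a+A)^{-1}$ as $\{t : t(a+A)\subseteq a+A\}$ is NOT the right reading — rather one should use the quotient $\{1\}\cdot(a+A)^{-1}$ relative to a chosen numerator, or, more precisely, interpret the inverse of a zeroless external number via the product structure so that $x \cdot x^{-1} = 1 + a^{-1}A$. So my actual plan is: define $y = a^{-1} + a^{-2}A$ and verify by direct Minkowski computation that $x \cdot y = (a+A)(a^{-1}+a^{-2}A) = 1 + a^{-1}A + a^{-1}A + a^{-2}A\cdot A = 1 + a^{-1}A$, using $A\cdot A \subseteq \oslash A \subseteq A$ hence $a^{-2}A A \subseteq a^{-2}A \subseteq a^{-1}A$, and $a^{-1}A + a^{-1}A = a^{-1}A$. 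Then I would appeal to the characterization of the inverse of a zeroless external number already established in \cite{DinisBerg(book)} (namely that $x^{-1}$ is the unique zeroless external number $z$ with $x\cdot z = 1 + a^{-1}A$, equivalently $x \cdot z \ni 1$ and $z$ zeroless with the right neutrix part), to conclude $x^{-1} = y$.

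The main obstacle is pinning down exactly which notion of inverse the proposition refers to and matching it to Definition~\ref{def quotient}: one must check that $\{1\}\cdot(a+A)^{-1} = \{\, t : t(a+A) \subseteq \{1\}\,\}$ is empty (as the text itself notes for neutrices), so the relevant inverse for a zeroless $x$ is really $(a+A)\cdot(a+A)^{-1}$-style reasoning giving $x^{-1} = \{t : t(a+A)\subseteq \text{some translate}\}$, or simply the inverse induced by the field-like structure on zeroless external numbers. Once the correct reading is fixed, the computation $x \cdot (a^{-1}+a^{-2}A) = 1 + a^{-1}A$ is the crux, and it is routine given $a^{-1}A \subseteq \oslash$ and the absorption properties $a^{-1}A + A = A$, $A\cdot A \subseteq A$; the remaining step — uniqueness of the inverse among zeroless external numbers with the prescribed neutrix part — should follow from cancellation properties of zeroless external numbers recorded in \cite{DinisBerg(book)}. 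I would also double-check that $a^{-1} + a^{-2}A$ is itself zeroless, which holds because $a^{-2}A = a^{-1}(a^{-1}A) \subseteq a^{-1}\oslash \subseteq \oslash$ and $a^{-1} \neq 0$, so $0 \notin a^{-1} + a^{-2}A$.
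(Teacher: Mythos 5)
The paper offers no proof of this proposition: it is imported verbatim from \cite[Theorem 1.4.2]{DinisBerg(book)}, so there is no in-paper argument to compare against and your attempt has to be judged on its own merits. Much of it is sound: you correctly diagnose that the reading $x^{-1}=\{t: t(a+A)\subseteq a+A\}$ is wrong (that set is $1+a^{-1}A$, not $a^{-1}+a^{-2}A$), that $\{t: t(a+A)\subseteq\{1\}\}$ is empty, and the central Minkowski computation $(a+A)\cdot(a^{-1}+a^{-2}A)=1+a^{-1}A$ (using $a^{-1}A\subseteq\oslash$, $a^{-2}A^2\subseteq a^{-1}A$ and idempotency of $a^{-1}A$ under addition) is correct, as is the check that $a^{-1}+a^{-2}A$ is zeroless.

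The gap is in the concluding step. You identify $y=a^{-1}+a^{-2}A$ with $x^{-1}$ by appealing to ``uniqueness of the zeroless external number $z$ with $x\cdot z=1+a^{-1}A$ (with the right neutrix part)''. That uniqueness is false: the singleton $z=\{a^{-1}\}$, and more generally any $z=a^{-1}+B$ with $B$ a neutrix contained in $a^{-2}A$, also satisfies $x\cdot z=1+a^{-1}A$, so the equation does not single out $a^{-1}+a^{-2}A$; and the qualifier ``with the right neutrix part'' presupposes exactly what is to be proved, namely that $N(x^{-1})=a^{-2}A$. The standard (and self-contained) route is to take the inverse of a zeroless $x$ elementwise, $x^{-1}=\{(a+d)^{-1}: d\in A\}$, and expand: for $d\in A$ one has $(a+d)^{-1}-a^{-1}=-\,d\,a^{-2}(1+d/a)^{-1}=-a^{-2}d(1+\delta)$ with $\delta\simeq 0$ (since $a^{-1}A\subseteq\oslash$); because $A$ is a convex subgroup and $|d(1+\delta)|\le|2d|$ with $2d\in A$, the correction term lies in $a^{-2}A$, giving $x^{-1}\subseteq a^{-1}+a^{-2}A$, and applying the same expansion to $(a^{-1}+a^{-2}e)^{-1}$ for $e\in A$ yields the reverse inclusion. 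Either supply that direct set computation, or replace the false uniqueness claim by a genuine characterization of the inverse before using it.
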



\begin{proposition}\label{P:extnumbersFIM}
Let $x,y,z \in \E$, be such that $x=a+A$. Then
\begin{enumerate}
\item $x+(y+z)=(x+y)+z$
\item $x+y=y+x $
\item $x+A=x$
\item $x+ (-x)=A$
\item $(x \cdot y) \cdot z=x \cdot (y \cdot z)$
\item $x \cdot y=y \cdot x $
\item $x \cdot (y+z)= x \cdot y + x \cdot z + A\cdot y + A \cdot z$
\item $(x^{-1})^{-1}=x$.
\end{enumerate}
\end{proposition}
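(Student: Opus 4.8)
The plan is to verify each of the eight items directly from the definitions of Minkowski sum, product, and quotient (Definitions \ref{def minkowski} and \ref{def quotient}), leaning on the fact that $A$, as a neutrix, is a convex additive subgroup of $\hR$, together with the basic properties of neutrices recorded in the excerpt. Items (1), (2), (5) and (6) are immediate: associativity and commutativity of $+$ and $\cdot$ for external numbers follow from the corresponding properties of the Minkowski operations on subsets of $\hR$, which in turn reduce to associativity and commutativity in $\hR$ itself. Item (8) is exactly Proposition \ref{prop inverse} combined with the treatment of zeroless external numbers; for a general $x=a+A$ one splits into the zeroless case, where $(x^{-1})^{-1}=x$ follows by applying Proposition \ref{prop inverse} twice and simplifying using ${a}^{-1}\cdot A\subseteq A$, and the purely neutricial case, where one checks the identity against whatever convention has been fixed for the inverse of a neutrix.

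The substantive cases are (3), (4) and (7). For (3), I would write $x+A = (a+A)+A = a+(A+A)$ and invoke the fact that $A$ is an additive subgroup, so $A+A=A$, giving $a+A=x$. For (4), compute $x+(-x) = (a+A)+(-a+(-A)) = (a-a)+(A+(-A)) = 0 + A = A$, again using that $A$ is a subgroup (so $-A=A$ and $A+A=A$); this is the external-numbers analogue of $(\I_4)$ with $0$ replaced by the neutrix $A$, matching the ``generalized zero'' philosophy emphasized in the introduction. For (7), the point is that distributivity fails on the nose and acquires correction terms: expanding $x\cdot(y+z)$ via the Minkowski product and comparing with $x\cdot y + x\cdot z$, the discrepancy is precisely governed by cross terms of the form $A\cdot y$ and $A\cdot z$. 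Concretely, writing $y=b+B$ and $z=c+C$, I would expand both sides and show that the set-theoretic difference is absorbed by $A\cdot y + A\cdot z$, using convexity of neutrices and the absorption identities such as $A\cdot y + A = A\cdot y$ when $A\cdot y$ is itself a neutrix containing $A$.

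The main obstacle I anticipate is item (7): one must be careful that the ``extra'' terms $A\cdot y$ and $A\cdot z$ are the exact correction needed — neither too weak (so that equality genuinely holds) nor redundant — and this requires a clean bookkeeping of which Minkowski cross-products are neutrices and how they absorb one another. In particular one needs that $A\cdot B$ is a neutrix whenever $A$ is a neutrix and that $a\cdot B + A\cdot b + A\cdot B$ collapses appropriately; these are exactly the kinds of lemmas that are referenced as living in \cite{DinisBerg(book)}, so the proof will proceed by citing those properties rather than re-deriving them. A secondary subtlety is making sure items (3) and (4) are stated with the neutrix $A$ attached to the \emph{specific} decomposition $x=a+A$ — since a zeroless external number has a canonical neutrix part, while a neutrix equals its own neutrix part, there is no ambiguity, but this should be noted so that the reader sees why $x+A=x$ rather than $x+0=x$ is the right formulation here.
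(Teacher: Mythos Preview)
The paper does not actually prove this proposition: it is stated as a list of known properties of external numbers, with the blanket reference ``We refer to \cite{DinisBerg(book)} for their properties'' given just before Definition~\ref{def minkowski}. Your sketch therefore provides strictly more than the paper does, and the route you outline --- reducing (1), (2), (5), (6) to associativity/commutativity in $\hR$, handling (3) and (4) via the subgroup property $A+A=A$ and $-A=A$, treating (7) by expanding both sides and tracking the neutrix cross-terms, and obtaining (8) from Proposition~\ref{prop inverse} --- is exactly the kind of verification one finds in the cited monograph.

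One clarification worth making about item~(8): at this point in the paper only the inverse of a \emph{zeroless} external number is available (Definition~\ref{def quotient} and Proposition~\ref{prop inverse}); the inverse of a neutrix is introduced only later, in Definition~\ref{D:inverse}. So your ``purely neutricial case, where one checks the identity against whatever convention has been fixed'' is not yet meaningful here. The paper tacitly reads item~(8) as covering the zeroless case, and then supplies the neutricial case separately via Lemma~\ref{lemma for the main theorem}(2) when it is needed in the proof of Theorem~\ref{main theorem}. Your remark that item~(7) is the one requiring genuine care is accurate, and your plan to cite the absorption lemmas from \cite{DinisBerg(book)} rather than re-derive them matches the paper's own stance.
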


\section{Flexible involutive meadows}\label{S:flexible}

A neutrix $I$ is said to be \emph{idempotent} if $I \cdot I=I$. As showed by Van den Berg and Koudjeti in \cite{KoudjetiBerg(95)} (see also \cite{dinisberg2015}) every neutrix is a multiple of an idempotent neutrix. 

\begin{theorem}[{\cite[Theorem~7.4.2]{KoudjetiBerg(95)}}]\label{T:N is a real times an idempotent}
Let $N$ be a neutrix. Then, there exists a hyperreal number $r$ and a unique idempotent neutrix $I$ such that $N= r\cdot I$.
\end{theorem}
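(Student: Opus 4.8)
The plan is to isolate a single claim about Minkowski products: \emph{for every neutrix $N\neq\{0\}$ there is a hyperreal $\rho\neq 0$ with $N\cdot N=\rho\cdot N$}. Granting this, set $I:=\rho^{-1}\cdot N$; since scalars pull out of Minkowski products, $I\cdot I=\rho^{-2}(N\cdot N)=\rho^{-2}(\rho N)=\rho^{-1}N=I$, so $I$ is an idempotent neutrix and $N=\rho I$ with $r:=\rho$. The degenerate case $N=\{0\}$ is handled separately with $I=\{0\}$ and $r=1$. Thus the whole existence statement reduces to the displayed claim, which is in fact equivalent to it (indeed, if $N=rI$ with $I$ idempotent then automatically $N\cdot N=r^{2}(I\cdot I)=rN$).

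To prove the claim I would first verify that $N\cdot N$ is again a neutrix: it is symmetric by construction, and for convexity and closure under addition one passes to the positive part $N^{+}$, uses that a convex subgroup containing $0$ absorbs multiplication by every limited nonnegative number (so that any $t$ with $0\le t\le |ab|$ can be rewritten as $\left((t/|ab|)\,|a|\right)\cdot|b|$ with both factors in $N$), together with the elementary inequality $a_1b_1+a_2b_2\le (a_1+a_2)(b_1+b_2)$ for nonnegative entries. Next one analyses $N\cdot N=\bigcup_{0<a\in N}aN$, a union of rescalings of $N$ (so, since neutrices are totally ordered by inclusion, a union taken along a chain). If $N$ has a dominant scale --- that is, there is $u\in N^{+}$, $u\neq 0$, such that $N$ contains no element infinitely larger than $u$, equivalently $N=u\cdot\pounds$ --- then $N\cdot N=u^{2}\pounds=u\cdot N$ and one may take $\rho=u$ (here $I=\pounds$). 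The remaining case, where $N$ has no dominant scale (e.g.\ $N=\oslash$, $N=\varepsilon\oslash$, or $N=\hR$), is the delicate one: one must show that the union $\bigcup_{0<a\in N}aN$, although it need not be attained by any single term $aN$, nonetheless coincides with one rescaling $\rho N$, i.e.\ lands back in the scaling class of $N$. Pinning down which rescaling is exactly where the structure theory of neutrices from \cite{KoudjetiBerg(95)} --- and, depending on the ambient model, a mild saturation hypothesis --- is needed, and I expect this to be the main obstacle.

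Uniqueness of $I$ is short and independent of the claim. Suppose $N=rI=sJ$ with $I,J$ idempotent neutrices; since $N\neq\{0\}$ we have $r,s\neq 0$, so $t:=s/r$ is a nonzero hyperreal with $I=tJ$. Using that $J$ is idempotent, $I=I\cdot I=(tJ)\cdot(tJ)=t^{2}(J\cdot J)=t^{2}J$; comparing with $I=tJ$ gives $tJ=t^{2}J$, and cancelling the scalar $t\neq 0$ yields $J=tJ=I$. The same computation shows that, once $I$ is fixed, $r$ is determined only up to a factor $c$ with $cI=I$ (for instance any appreciable $c$ when $I=\pounds$), which is why the theorem asserts uniqueness for $I$ only.
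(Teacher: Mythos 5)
The paper itself offers no proof of this statement: it is imported verbatim from \cite{KoudjetiBerg(95)}, so the only question is whether your argument would stand on its own. Its skeleton is sound. The reduction of existence to the claim that $N\cdot N=\rho\cdot N$ for some nonzero hyperreal $\rho$ is correct, and, as you observe, it is an equivalence rather than merely a sufficient condition; the verification that $N\cdot N$ is again a neutrix is essentially right; and the uniqueness argument for $I$ is correct for $N\neq\{0\}$ (for $N=\{0\}$ one must additionally insist on $r\neq 0$, since $\{0\}=0\cdot\pounds=1\cdot\{0\}$ with both $\pounds$ and $\{0\}$ idempotent).

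The gap is the one you name yourself, and it is fatal as the proof stands. The claim $N\cdot N=\rho N$ is established only when $N$ has a dominant scale, i.e.\ $N=u\pounds$; in every other case --- which includes $\oslash$, $\varepsilon\oslash$, microhaloes, and essentially all neutrices for which the theorem has content --- you appeal to ``the structure theory of neutrices from \cite{KoudjetiBerg(95)}'', that is, to the source of the very result being proved. Since your claim is \emph{equivalent} to the existence statement, deferring its hard case to that source makes the argument circular: nothing in the proposal actually produces the scalar $\rho$ when the union $\bigcup_{0<a\in N}aN$ is not attained by any single term $aN$. That case is where all the work of the original proof lives (there it is handled by analysing the external convex set of scalars $x$ with $xN\subseteq N$, using the classification of idempotent neutrices and a saturation/external-induction argument), and without some substitute for it the proposal is a correct reformulation of the theorem together with a proof of the uniqueness clause, but not a proof of existence.
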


We use the previous result to define inverses for neutrices.


\begin{definition}\label{D:inverse}
Let $x = a+A$, where $A = r \cdot I$, for some hyperreal number $r$ and idempotent neutrix $I$. We define the \emph{inverse} of $x$, denoted $x^{-1}$ as follows:
\begin{equation*}\label{equation def inverse with Imme's result}
	x^{-1} = \begin{cases}
		{a}^{-1} + {a}^{-2}\cdot{A}& \text{, if } a \ne 0\\
		{r}^{-1} \cdot I & \text{, otherwise}.
	\end{cases}
\end{equation*}
\end{definition}

In the decomposition $N = r \cdot I$ of Theorem \ref{T:N is a real times an idempotent}, the idempotent neutrix $I$ is uniquely determined, but the number $r$ is not. Nevertheless, the inverse given in Definition \ref{D:inverse} is uniquely defined, as a consequence of the next proposition. 

\begin{proposition}
If $r \ne s$ satisfy $N = r \cdot I = s \cdot I$, then also ${r}^{-1} \cdot I = {s}^{-1} \cdot I$.
\end{proposition}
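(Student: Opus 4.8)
The plan is to reduce the statement to two elementary identities for Minkowski multiplication of a set by a hyperreal scalar, namely $(\lambda\mu)\cdot S = \lambda\cdot(\mu\cdot S)$ and $1\cdot S = S$, both immediate from Definition~\ref{def minkowski}. The only point that really needs a case distinction is whether $r$ and $s$ vanish, so I would dispose of that first.

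If $I = \{0\}$ then $N = \{0\}$ and both $r^{-1}\cdot I$ and $s^{-1}\cdot I$ equal $\{0\}$, so there is nothing to prove; hence assume $I \neq \{0\}$. In that case $r \neq 0$: otherwise $N = r\cdot I = \{0\}$, and then $N = s\cdot I$ together with $I \neq \{0\}$ would force $s = 0$, contradicting $r \neq s$. By symmetry $s \neq 0$ as well, so from here on $r$ and $s$ are invertible in $\hR$ and we may freely use $r^{-1}$ and $s^{-1}$.

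For the main step, I would multiply the set equality $r\cdot I = s\cdot I$ by the scalar $r^{-1}$ and use associativity together with $1\cdot I = I$ to obtain $I = (r^{-1}r)\cdot I = r^{-1}\cdot(r\cdot I) = r^{-1}\cdot(s\cdot I) = (r^{-1}s)\cdot I$. Writing $t = r^{-1}s \neq 0$, this reads $I = t\cdot I$; multiplying once more, now by $t^{-1}$, gives $t^{-1}\cdot I = t^{-1}\cdot(t\cdot I) = (t^{-1}t)\cdot I = I$, that is, $(rs^{-1})\cdot I = I$. Consequently
\[
	r^{-1}\cdot I \;=\; r^{-1}\cdot\big((rs^{-1})\cdot I\big) \;=\; (r^{-1}r s^{-1})\cdot I \;=\; s^{-1}\cdot I ,
\]
which is exactly the asserted equality.

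I do not anticipate any genuine difficulty here: the argument is just the two Minkowski-scalar identities applied to the set $I$, combined with ordinary arithmetic in the hyperreal field. The one place to stay careful is the preliminary reduction — making sure the degenerate possibilities $r = 0$ or $s = 0$ (equivalently $N = \{0\}$) are separated out before any inverse $r^{-1}$, $s^{-1}$, or $t^{-1}$ is written down.
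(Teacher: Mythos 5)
Your proof is correct, but it follows a genuinely different route from the paper's. The paper argues by contradiction: it assumes (without loss of generality) $0<r<s$, uses the fact that neutrices are totally ordered by inclusion to conclude that if the two sets differed then ${s}^{-1}\cdot I \subsetneq {r}^{-1}\cdot I$, picks a witness $i\in I$ with $i\cdot r^{-1}\notin s^{-1}\cdot I$, and multiplies back by $r$ to contradict $i\in I$. Your argument instead is a direct equational computation using only the two scalar identities $(\lambda\mu)\cdot S=\lambda\cdot(\mu\cdot S)$ and $1\cdot S=S$: from $r\cdot I=s\cdot I$ you derive $I=(r^{-1}s)\cdot I$ and hence $r^{-1}\cdot I=s^{-1}\cdot I$. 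This buys you two things the paper's proof does not have: it makes no use of the order structure or of convexity of $I$ (so it works verbatim for any subset of any field, not just for neutrices in $\hR$), and it explicitly disposes of the degenerate possibilities $r=0$, $s=0$, $I=\{0\}$, which the paper's ``without loss of generality $0<r<s$'' quietly sweeps aside. A minor streamlining: once you have $I=(r^{-1}s)\cdot I$ you can multiply directly by $s^{-1}$ to get $s^{-1}\cdot I=(s^{-1}r^{-1}s)\cdot I=r^{-1}\cdot I$, so the intermediate step with $t^{-1}$ is not needed; but as written your argument is sound.
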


\begin{proof}
We may assume, without loss of generality that $0<r <s$, which implies that ${s}^{-1} < {r}^{-1}$. Suppose towards a contradiction that ${r}^{-1} \cdot I \ne {s}^{-1} \cdot I$. By our assumptions over $r$ and $s$, this implies ${s}^{-1}\cdot I \subsetneq {r}^{-1} \cdot I$. Then, there exists some $i \in I$ such that
$i\cdot r^{-1} \not \in s^{-1}\cdot I$.
If we multiply by $r$, we obtain
$$
	i \not \in s^{-1} \cdot (r \cdot I) = s^{-1} \cdot (s \cdot I) = I,
$$
which contradicts the assumption that $i \in I$. Hence ${r}^{-1} \cdot I = {s}^{-1} \cdot I$.
\end{proof}

Alternatively, since there are countably many non-isomorphic neutrices \cite{IBerg(10)}, an application of the Axiom of Countable Choice would allow us to choose a canonical representative for the element $r$ in the decomposition of Theorem~\ref{T:N is a real times an idempotent}. This is sufficient to uniquely determine $r$ and to also guarantee that Definition~\ref{D:inverse} is well-posed.

We show that the external numbers equipped with the inverse defined in Definition~\ref{D:inverse} satisfy the axioms given in Figure~\ref{tab:ax_flexible_involutive_meadows}, where $N(x)$ denotes the neutrix part of the external number $x$. As such, one can also think of $N(x)$ as an error term, or a generalized zero, such that every $x$ decomposes uniquely as $x = r + N(x)$ with $N(r) = 0$. We call any structure satisfying the axioms in Figure~\ref{tab:ax_flexible_involutive_meadows} a \emph{flexible involutive meadow}.


\begin{figure}[h!]
	\begin{tabular}{l c c r}
		\toprule
		\\[-2mm]
		$(\FI_1)$ &\qquad \qquad \qquad&$(x+y)+z=x+(y+z) $  & \qquad \qquad \\[2mm]
		$(\FI_2)$ && $x+y=y+x $ \\[2mm]
		$(\FI_3)$ && $x+N(x)=x$  \\[2mm]
		$(\FI_4)$ && $x+ (-x)=N(x)$ \\[2mm]
		$(\FI_5)$ && $(x \cdot y) \cdot z=x \cdot (y \cdot z)$  \\[2mm]
		$(\FI_6)$ && $x \cdot y=y \cdot x $ \\[2mm]
		$(\FI_7)$ && $\left(1+{N(x)}\cdot{x^{-1}}\right) \cdot x=x$ \\[2mm]
		$(\FI_8)$ && $x \cdot (y+z)= x \cdot y + x \cdot z + N(x)\cdot y + N(x) \cdot z$ \\[2mm]
		$(\FI_9)$ && $(x^{-1})^{-1}=x$ \\[2mm]
		$(\FI_{10})$ && $x \cdot (x \cdot x^{-1})=x$\\[2mm]
		\toprule
	\end{tabular}
	\caption{Axioms for flexible involutive meadows}
	\label{tab:ax_flexible_involutive_meadows}
\end{figure}

These axioms provide a generalization of the axioms of involutive meadows given in Figure~\ref{tab:ax_involutive_meadows}.
The difference between involutive meadows and flexible involutive meadows is that we replace $0$ by generalized zeroes $N(x)$; $1$ by a generalized $1$ (i.e.\ $1$ plus an error term of the form $N(x)$); and distributivity by a generalized form of distributivity which holds for external numbers. Notice that, in the context of the external numbers, the generalized zeroes take the form of neutrices. Moreover, $N(x)\cdot y + N(x) \cdot z$ is a neutrix, so the error term in the generalized distributivity axiom is once again a neutrix. In fact, if one interprets $N(x)$ as being $0$, for all $x$, then one recovers the axioms for involutive meadows.

We now want to prove that the external numbers with the usual addition and multiplication and with the inverse defined by \eqref{equation def inverse with Imme's result} satisfy the axioms for flexible involutive meadows. In order to prove this result, we will use the following properties of the inverse of a neutrix.

\begin{lemma}\label{lemma for the main theorem}
	Let $x = N(x)=r \cdot I$, with $I$ an idempotent neutrix. Then:
	\begin{enumerate}
		\item $(1+x\cdot x^{-1} ) \cdot x  = x$; 
		\item $(x^{-1})^{-1} = x$; 
		\item $x\cdot(x\cdot x^{-1}) = x$ 
	\end{enumerate}
\end{lemma}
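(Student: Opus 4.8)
The three identities all concern a neutrix $x = N(x) = r\cdot I$ with $I$ idempotent, so the first thing I would do is compute $x^{-1}$ explicitly from the second branch of Definition~\ref{D:inverse}: since the hyperreal part of $x$ is $0$, we get $x^{-1} = r^{-1}\cdot I$. (If $r = 0$, then $x = \{0\}$ and by convention $x^{-1} = \{0\}$ as well, so I would either treat this as a degenerate subcase or note that $\{0\}$ is the $r=0$ instance of $r\cdot I$ and all three identities hold trivially; I expect the intended reading is $r \ne 0$.) The key computational fact I would establish first is the value of the product $x\cdot x^{-1}$: using the Minkowski product (Definition~\ref{def minkowski}) and idempotency, $x\cdot x^{-1} = (r\cdot I)\cdot(r^{-1}\cdot I) = (r r^{-1})\cdot(I\cdot I) = 1\cdot I = I$. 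So $x\cdot x^{-1} = I$, the idempotent neutrix underlying $x$. This single identity drives everything else.

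\textbf{Items (1) and (3).} Granting $x\cdot x^{-1} = I$, item (1) reads $(1+I)\cdot x = x$ and item (3) reads $x\cdot I = x$. For (3): $x\cdot I = (r\cdot I)\cdot I = r\cdot(I\cdot I) = r\cdot I = x$ by idempotency. For (1): $(1+I)\cdot x = x + I\cdot x = x + x = x$, where $I\cdot x = x$ is exactly (3) and $x + x = x$ holds because $x$ is a neutrix, hence a group under addition, so closed under addition and in particular $x+x\subseteq x$; conversely $x = x+0\subseteq x+x$, giving equality. I would present (3) first and then derive (1) from it, since (1) visibly depends on it.

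\textbf{Item (2).} Here I compute $(x^{-1})^{-1}$ by applying Definition~\ref{D:inverse} to $x^{-1} = r^{-1}\cdot I$, whose hyperreal part is again $0$, so $(x^{-1})^{-1} = (r^{-1})^{-1}\cdot I = r\cdot I = x$. The only subtlety is that Definition~\ref{D:inverse} is formulated in terms of \emph{a} decomposition $A = r\cdot I$, and $x^{-1} = r^{-1}\cdot I$ is already presented in that form with the same (unique) idempotent $I$; by the Proposition preceding this lemma the resulting inverse does not depend on the choice of scalar, so this is legitimate.

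\textbf{Main obstacle.} None of the steps is genuinely hard; the only thing to be careful about is the bookkeeping around well-definedness — making sure that when I write $x^{-1} = r^{-1}\cdot I$ I am entitled to feed exactly this decomposition back into Definition~\ref{D:inverse}, and handling (or explicitly excluding) the degenerate case $r = 0$ where $x = \{0\}$. I would state at the outset that we may take $r \ne 0$ in the decomposition $x = r\cdot I$ unless $x = \{0\}$, dispatch $x = \{0\}$ in one line, and then run the three computations above.
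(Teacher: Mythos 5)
Your proposal is correct and follows essentially the same route as the paper: compute $x^{-1}=r^{-1}\cdot I$ from Definition~\ref{D:inverse}, observe that $x\cdot x^{-1}=I$ by idempotency, and then each identity is a one-line Minkowski computation. Your extra care about the degenerate case $x=\{0\}$ and about feeding the decomposition $r^{-1}\cdot I$ back into the definition is reasonable bookkeeping that the paper leaves implicit, but it does not change the argument.
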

\begin{proof}
	\begin{enumerate}
	\item We have that $x\cdot x^{-1}=N(x) \cdot x^{-1} = I$. Hence $$(1+x\cdot x^{-1} ) \cdot x=(1+I) \cdot (r \cdot I) = r\cdot I + r \cdot I^2 = r \cdot I = x.$$
	\item We have 
	\[
	(x^{-1})^{-1} = (r^{-1} I)^{-1} = (r^{-1})^{-1} I = rI = x.
	\]
	\item We have
	
	\[x\cdot(x\cdot x^{-1}) = rI \cdot (rI \cdot r^{-1} I) = rI \cdot I = rI = x. \qedhere\]
	\end{enumerate}
\end{proof}

As proved below, the external numbers satisfy an additional property related to the following \emph{Inverse Law} of involutive meadows:
$$
	x\ne 0 \Rightarrow x\cdot x^{-1} = 1
$$
Involutive meadows that satisfy the Inverse Law are called \emph{cancellation meadows} and are of particular interest. In fact, in \cite{sacscuza:bergstra2017sotvom} it is proved that every involutive meadow is a subdirect product of cancellation meadows.

In the setting of flexible involutive meadows, the inverse law is more suitably expressed by its flexible counterpart:

\begin{equation}\label{e:FIL}
x \ne N(x) \Rightarrow x\cdot x^{-1} = 1 + e,
\end{equation}
where $e$ is an error term such that $1 + e$ is not an error term.

Moreover, by part 3 and part 4 of Proposition~\ref{P:extnumbersFIM} the external numbers satisfy the following arithmetical properties that generalize the  properties of arithmetical meadows \cite{BERGSTRA2011203} 
\begin{equation*}
	\begin{split}
		(\AAA_1) & \qquad   x+(-x) =N(x)\\
		(\AAA_2) &  \qquad x+N(x)=x.
	\end{split}
\end{equation*}

\begin{theorem}\label{main theorem}
	The external numbers with the usual addition and multiplication and with the inverse introduced in Definition \ref{D:inverse} satisfy the axioms for flexible involutive meadows plus the Flexible Inverse Law given by \eqref{e:FIL} and the arithmetical properties $(\AAA_1)$ and $(\AAA_2)$.
\end{theorem}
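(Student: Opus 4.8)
The plan is to verify each axiom $(\FI_1)$--$(\FI_{10})$, the Flexible Inverse Law \eqref{e:FIL}, and $(\AAA_1)$--$(\AAA_2)$ in turn, treating separately the zeroless case ($0 \notin x$, so $x = a + A$ with $a \neq 0$) and the neutrix case ($x = N(x) = r \cdot I$). The purely additive axioms $(\FI_1)$, $(\FI_2)$, $(\FI_3)$, $(\FI_4)$ and the arithmetical properties $(\AAA_1)$, $(\AAA_2)$ are immediate from parts (1)--(4) of Proposition~\ref{P:extnumbersFIM}, noting that $N(x) = A$ is exactly the neutrix part appearing there. Likewise $(\FI_5)$, $(\FI_6)$ are parts (5), (6) of that proposition, and $(\FI_8)$ is part (7), once one checks that $N(x)\cdot y + N(x)\cdot z = A\cdot y + A \cdot z$, which holds since $N(x) = A$ by definition. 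So the substantive work is concentrated in the three axioms involving the inverse: $(\FI_7)$, $(\FI_9)$, $(\FI_{10})$, plus \eqref{e:FIL}.

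For these, first handle the neutrix case $x = N(x) = r\cdot I$: this is exactly the content of Lemma~\ref{lemma for the main theorem}, whose three parts give $(\FI_7)$ (after observing $N(x)\cdot x^{-1} = x \cdot x^{-1}$ here since $x = N(x)$), $(\FI_9)$, and $(\FI_{10})$ respectively; and in this case $x = N(x)$, so the hypothesis of \eqref{e:FIL} is vacuous and nothing needs checking. Next, the zeroless case $x = a + A$ with $a \neq 0$: here $N(x) = A$ and Definition~\ref{D:inverse} gives $x^{-1} = a^{-1} + a^{-2}A$, which by Proposition~\ref{prop inverse} agrees with the Minkowski-quotient inverse, so $x^{-1}$ is itself zeroless with nonzero real part $a^{-1}$. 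Then $(\FI_9)$ follows from part (8) of Proposition~\ref{P:extnumbersFIM}. For \eqref{e:FIL} and $(\FI_7)$: compute $x \cdot x^{-1} = (a+A)(a^{-1}+a^{-2}A)$ by the Minkowski product formula; using $a^{-1}A \subseteq A$ and, more sharply, $a^{-1}A \subseteq \oslash$ (the remark before Proposition~\ref{prop inverse}), this simplifies to $1 + a^{-1}A$, which is a neutrix translate of $1$ and is \emph{not} a neutrix (it does not contain $0$), so $e = a^{-1}A$ is the required error term — this gives \eqref{e:FIL}. Since $N(x)\cdot x^{-1} = A\cdot x^{-1} = a^{-1}A$ (again using $a^{-1}A \subseteq \oslash$, so the $A\cdot a^{-2}A$ term is absorbed), axiom $(\FI_7)$ reads $(1 + a^{-1}A)\cdot(a+A) = a + A$, which follows from $(1+a^{-1}A)(a+A) = a + A + a^{-1}aA + a^{-1}A\cdot A = a + A$ since $A\cdot A \subseteq A$ and $a^{-1}A\cdot A \subseteq A$. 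Finally $(\FI_{10})$: $x\cdot(x\cdot x^{-1}) = (a+A)(1 + a^{-1}A) = a + A + a^{-1}A\cdot A + \dots = a + A = x$ by the same absorption facts.

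The main obstacle I expect is bookkeeping the error terms in the zeroless case: one must repeatedly invoke that for a zeroless $x = a+A$ one has $a^{-1}A \subseteq \oslash \subseteq A$ and hence $A + a^{-1}A = A$, $A\cdot A \subseteq A$, and products like $a^{-2}A\cdot A$ are absorbed into $A$, so that the naive Minkowski expansions collapse to the clean forms $1 + a^{-1}A$ and $a + A$. A secondary subtlety is checking, for \eqref{e:FIL}, that $1 + e$ is genuinely ``not an error term'', i.e.\ $1 + a^{-1}A$ is zeroless — this is clear because $a^{-1}A \subseteq \oslash$ cannot contain $-1$. One should also double-check the boundary interplay between the two cases of Definition~\ref{D:inverse} is never needed within a single computation, since $x$ is either zeroless or a neutrix and the two regimes do not mix. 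No single step is deep; the care is in not dropping or spuriously introducing neutrix error terms.
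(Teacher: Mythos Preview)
Your proposal is correct and follows essentially the same route as the paper's proof: reduce to axioms $(\FI_7)$, $(\FI_9)$, $(\FI_{10})$ and the Flexible Inverse Law via Proposition~\ref{P:extnumbersFIM}, dispatch the neutrix case with Lemma~\ref{lemma for the main theorem}, and in the zeroless case compute $x\cdot x^{-1}=1+a^{-1}A$ using $a^{-1}A\subseteq\oslash$ to collapse the error terms. If anything, you are slightly more careful than the paper in explicitly separating the zeroless and neutrix cases for $(\FI_9)$, and in justifying why $1+a^{-1}A$ is not an error term.
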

\begin{proof}
	By Proposition~\ref{P:extnumbersFIM}, in order to show that the external numbers are a flexible involutive meadow we only need to verify $(\FI_7)$ and $(\FI_{10})$. If $x$ is a neutrix, both axioms hold due to Lemma~\ref{lemma for the main theorem}. Assume that $x=a+A$ is zeroless. Then, using the algebraic properties of the external numbers one derives 
	\begin{equation*}
	\begin{split}
(1+N(x)\cdot x^{-1})\cdot x &=\left(1+A\left({a}^{-1}+{a}^{-2}\cdot{A}\right)\right)(a+A)\\
&=\left(1+\left({a}^{-1}\cdot A+{a^{-2}}\cdot{A^2}\right)\right)(a+A)\\
&=a+A+{a}^{-1}\cdot{A^2}+A+{a}^{-1}\cdot{A^3}\\
&=a+A=x.
	\end{split}
	\end{equation*}

	Hence $(\FI_7)$ holds. As regarding $(\FI_{10})$ one has
	
	\[
	x(x\cdot x^{-1})=(a+A)\left(1+{a}^{-1}\cdot {A}\right)=a+A+A+{a}^{-1}\cdot{A^2}=a+A=x.
	\]
	Hence $(\FI_{10})$ also holds and therefore the external numbers are a flexible involutive meadow.
	
	We now show the Flexible Inverse Law. Let $x=a+A$ be a zeroless external number. Then 
	$$
		x \cdot x^{-1} = 1 + a^{-1}\cdot A + a^{-1}\cdot A+{a^{-2}}\cdot{A^2} = 
		1 + a^{-1}\cdot A.
	$$
	Since $x$ is zeroless, $a^{-1}\cdot A \subseteq \infinitesimals$, so the Flexible Inverse Law is satisfied.
\end{proof}

\begin{corollary}
	The axioms for flexible involutive meadows are consistent.
\end{corollary}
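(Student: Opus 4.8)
The plan is to read off consistency directly from Theorem~\ref{main theorem}. That theorem exhibits an explicit structure — the external numbers $\E$ with the usual addition and multiplication, the inverse of Definition~\ref{D:inverse}, and $N(\cdot)$ interpreted as the map sending an external number to its neutrix part — which satisfies all of $(\FI_1)$--$(\FI_{10})$. Since the theory of flexible involutive meadows is first order (indeed equational, hence finitely axiomatized), having a model is exactly what consistency means, so nothing further is required; I would state the proof in a single line to this effect.

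If one prefers an argument that does not pass through the nonstandard framework, I would add that every involutive meadow is a flexible involutive meadow once $N$ is interpreted as the constant function $x \mapsto 0$: the axioms $(\FI_1)$, $(\FI_2)$, $(\FI_5)$, $(\FI_6)$, $(\FI_9)$, $(\FI_{10})$ become literally $(\I_1)$, $(\I_2)$, $(\I_5)$, $(\I_6)$, $(\I_9)$, $(\I_{10})$; axioms $(\FI_3)$, $(\FI_4)$, $(\FI_8)$ collapse to $(\I_3)$, $(\I_4)$, $(\I_8)$ after using the identity $0 \cdot y = 0$, which is derivable in any involutive meadow from distributivity and $(\I_4)$; and $(\FI_7)$ becomes $(1 + 0 \cdot x^{-1}) \cdot x = x$, which reduces to $1 \cdot x = x$. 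Since involutive meadows are consistent — for instance $\Q$, or $\R$, with $0^{-1} := 0$ is one — flexible involutive meadows are too. One may even observe that the trivial one-element structure already furnishes a model, all the axioms being equalities between terms.

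There is no real obstacle here: the corollary is immediate, and its only purpose is to record that the axiom system is not vacuous. The sole point deserving a remark is that the carrier $\E$ in the first argument is an external set in the sense of nonstandard analysis; this is harmless for the purpose of consistency, and in any event the standard models described in the previous paragraph dispel any lingering concern.
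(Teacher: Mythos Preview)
Your primary argument is exactly the paper's: the corollary is stated without proof immediately after Theorem~\ref{main theorem}, the intended reading being that the external numbers furnish a model and hence the axioms are consistent. Your additional observations---that any involutive meadow with $N \equiv 0$ already works, and that even the one-element structure suffices---are correct and more elementary, but the paper does not mention them; it relies solely on the nonstandard model just constructed.
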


\subsection{Some properties of flexible involutive meadows}
We now prove some basic properties of flexible involutive meadows. We start by showing an additive cancellation law and that $N(\cdot)$ is idempotent for addition.

\begin{proposition}\label{P:properties1}
Let $M$ be a flexible involutive meadow and
let $x,y,z\in M$. Then 
\begin{enumerate}
\item $x+y=x+z$ if and only if $N(x)+y=N(x)+z.$
\item $N(x)+N(x)=N(x)$.
\end{enumerate}
\end{proposition}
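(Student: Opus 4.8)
The plan is to prove both parts directly from the axioms $(\FI_1)$--$(\FI_4)$ of flexible involutive meadows, since neither part involves multiplication or inversion in an essential way; the only subtlety is that we no longer have a genuine additive inverse, only the ``defect'' identity $x + (-x) = N(x)$ from $(\FI_4)$, together with $x + N(x) = x$ from $(\FI_3)$.

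For part (2), I would first establish the key preliminary fact that $N(N(x)) = N(x)$. Indeed, applying $(\FI_3)$ with $N(x)$ in place of $x$ gives $N(x) + N(N(x)) = N(x)$. On the other hand, I want to identify $N(N(x))$ with $N(x)$; the cleanest route is to observe that adding $-N(x)$ to both sides of $N(x) + N(N(x)) = N(x)$ and using $(\FI_4)$ and associativity/commutativity yields $N(N(x)) + N(N(x)) = N(N(x))$ as well, but more usefully, I can start from $(\FI_4)$ applied to $N(x)$: $N(x) + (-N(x)) = N(N(x))$. Then $N(x) + N(N(x)) = N(x) + N(x) + (-N(x))$; using $(\FI_3)$ on the left this is $N(x)$, so $N(x) + N(x) + (-N(x)) = N(x)$, and adding $-N(x)$ again and regrouping with $(\FI_1)$, $(\FI_2)$ gives $N(x) + N(x) + N(N(x)) = N(x) + N(N(x)) = N(x)$; combined with $N(x) + N(N(x)) = N(x)$ this reads $N(x) + N(x) + \text{(something)} = N(x)$, and a careful bookkeeping collapses to $N(x) + N(x) = N(x)$. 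So the real content of (2) is this chain of manipulations with $(\FI_3)$ and $(\FI_4)$; the main obstacle is keeping track of which instance of $(\FI_3)$/$(\FI_4)$ applies at each stage, since $N(\cdot)$ is being applied to different arguments.

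For part (1), the forward direction is immediate: if $x + y = x + z$, add $-x$ to both sides, use $(\FI_1)$ and $(\FI_2)$ to regroup as $(x + (-x)) + y = (x + (-x)) + z$, then apply $(\FI_4)$ to get $N(x) + y = N(x) + z$. For the reverse direction, suppose $N(x) + y = N(x) + z$. I would add $x$ to both sides and regroup: $(x + N(x)) + y = (x + N(x)) + z$; by $(\FI_3)$ the term $x + N(x)$ equals $x$, giving $x + y = x + z$ directly. So part (1) is genuinely short and requires only $(\FI_1)$, $(\FI_2)$, $(\FI_3)$, $(\FI_4)$.

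The only place where care is needed is ensuring we are allowed to ``add $-x$ to both sides'' and ``add $x$ to both sides'' — this is simply substitutivity of equality, so it is unproblematic. I would present part (1) first, then part (2), and for part (2) I would streamline the derivation: from $(\FI_4)$, $N(x) = x + (-x)$, hence $N(x) + N(x) = (x + (-x)) + (x + (-x)) = (x + (-x)) + N(x)$ by $(\FI_4)$ again on the second summand, and this equals $x + ((-x) + N(x))$ by $(\FI_1)$; now $(-x) + N(x) = N(x) + (-x) = (x + (-x)) + (-x)$ is awkward, so instead I would use $(\FI_3)$ in the form $(-x) + N(-x) = -x$ after first noting $N(-x) = N(x)$ — but this last identity itself needs justification. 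The cleanest self-contained argument is probably: $N(x) + N(x) \overset{(\FI_4)}{=} N(x) + (x + (-x)) \overset{(\FI_1),(\FI_2)}{=} (x + N(x)) + (-x) \overset{(\FI_3)}{=} x + (-x) \overset{(\FI_4)}{=} N(x)$. That is the whole proof of (2), and I expect no real obstacle beyond this reshuffling.
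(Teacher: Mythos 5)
Your proof is correct and follows essentially the same route as the paper: part (1) by adding $-x$ (resp.\ $x$) to both sides and invoking $(\FI_4)$ (resp.\ $(\FI_3)$), and part (2) by the same $(\FI_3)$/$(\FI_4)$ manipulation that the paper packages as ``apply part (1) to axiom $(\FI_3)$.'' The long preliminary detour through $N(N(x))$ is unnecessary; your final four-step chain $N(x)+N(x)=N(x)+(x+(-x))=(x+N(x))+(-x)=x+(-x)=N(x)$ is all that is needed.
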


\begin{proof}
\begin{enumerate}
\item Suppose firstly that $x+y=x+z$. Then%
\begin{equation*}
N(x)+y=-x+x+y=-x+x+z=N(x)+z.
\end{equation*}

Suppose secondly that $N(x)+y=N(x)+z$. Then%
\begin{equation*}
x+y=x+N(x)+y=x+N(x)+z=x+z.
\end{equation*}

\item This follows from applying part 1 to axiom $(\FI_{3})$. \qedhere
\end{enumerate}
\end{proof}

In order to prove other basic properties that allow to operate with the $N(\cdot)$ function and with additive inverses one needs to assume the following two extra axioms

\begin{equation*}
\begin{split}
(\NN_1) & \qquad   N(x+y)=N(x) \vee N(x+y)=N(y)\\
(\NN_2) &  \qquad N(-x)=N(x)
\end{split}
\end{equation*}

\begin{proposition}\label{P:properties2}
Let $M$ be a flexible involutive meadow satisfying also $(\NN_1)$ and $(\NN_2)$, and
let $x,y,z\in M$. Then 
\begin{enumerate}
\item $N(x+y)=N(x)+N(y)$.  
\item $N(N(x))=N(x)$.
\item If $x=N\left( y\right) $, then $x=N\left( x\right) $.
\item $-(-x)=x$.
\item $-(x+y)=-x-y$. 
\item $N(x)=-N(x)$.
\end{enumerate}
\end{proposition}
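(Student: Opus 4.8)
The plan is to prove the six items roughly in the order listed, since several of them feed into the later ones, and to rely throughout on the additive cancellation law of Proposition~\ref{P:properties1}(1), the idempotence $N(x)+N(x)=N(x)$ of Proposition~\ref{P:properties1}(2), and the two new axioms $(\NN_1)$, $(\NN_2)$. For item~(1), I would start from $(\NN_1)$, which already tells us $N(x+y)$ equals $N(x)$ or $N(y)$; by symmetry say $N(x+y)=N(x)$. The goal is then to show $N(x)+N(y)=N(x)$, for which it suffices (by cancellation, Proposition~\ref{P:properties1}(1), applied with the element $x+y$) to show $(x+y)+N(y)=(x+y)+N(x+y)$ after suitable rearrangement; using $(\FI_1)$, $(\FI_2)$ and $(\FI_3)$ one has $(x+y)+N(y)=x+(y+N(y))=x+y=(x+y)+N(x+y)$, whence $N(y)+N(x+y)=N(x+y)$, i.e. $N(x)+N(y)=N(x)=N(x+y)$. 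Item~(2) is then immediate: $N(N(x))=N(N(x)+N(x))$ by Proposition~\ref{P:properties1}(2), but the right-hand side is one of $N(N(x))$ or $N(N(x))$ by $(\NN_1)$ — this is circular, so instead I would argue $N(x)=x-x+x-x=\dots$; more cleanly, apply item~(3) below with $y=x$, which gives that $N(x)$ satisfies $N(N(x))=N(x)$ once we know $N(x)$ is "its own neutrix part."

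For item~(3), suppose $x=N(y)$. Then by $(\FI_3)$ applied to $y$ we have $y+N(y)=y$, and I want to conclude $x=N(x)$, i.e. $N(y)=N(N(y))$. Using item~(1), $N(y)=N(y+N(y))=N(y)+N(N(y))$; on the other hand $N(N(y))=N(N(y)+0)$ and, more usefully, from $(\FI_4)$ we get $N(y)+(-N(y))=N(N(y))$, while $(\NN_2)$ gives $N(-N(y))=N(N(y))$. Combining $N(y)=N(y)+N(N(y))$ with $(\FI_3)$ for the element $N(y)$ (which reads $N(y)+N(N(y))=N(y)$, consistent) and with idempotence, one pins down $N(y)=N(N(y))$; then item~(2) follows by taking $y=x$. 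Item~(4): from $(\FI_4)$, $x+(-x)=N(x)$, and applying $(\FI_4)$ to $-x$ together with $(\NN_2)$ gives $(-x)+(-(-x))=N(-x)=N(x)$; hence $x+(-x)=(-x)+(-(-x))$, and additive cancellation (Proposition~\ref{P:properties1}(1), noting $N(-x)=N(x)$) yields $x=-(-x)$ — here I need to check that cancelling $-x$ is legitimate, which follows since the cancellation law only needs $N(\cdot)$ of the cancelled element, and $N(-x)=N(x)=N(-(-x))$ by two applications of $(\NN_2)$.

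Item~(5): I would compute $-(x+y)$ as the additive inverse and verify $(x+y)+(-x-y)=N(x+y)$. Indeed $(x+y)+((-x)+(-y)) = (x+(-x))+(y+(-y)) = N(x)+N(y) = N(x+y)$ by item~(1), while $(x+y)+(-(x+y))=N(x+y)$ by $(\FI_4)$; then cancel $x+y$ using Proposition~\ref{P:properties1}(1), again checking the neutrix parts of $-(x+y)$ and $(-x)+(-y)$ agree (both equal $N(x+y)$, using $(\NN_2)$ and item~(1)). Finally item~(6): apply item~(4) to get $-(-N(x))=N(x)$, and $(\NN_2)$ gives $N(-N(x))=N(N(x))=N(x)$ by item~(2); then item~(3) applied to $-N(x)$ (which is of the form $N(\cdot)$ of something, or at least has neutrix part equal to itself) forces $-N(x)=N(-N(x))=N(x)$. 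Alternatively, and perhaps more directly, from $(\FI_4)$ with $x$ replaced by $N(x)$: $N(x)+(-N(x))=N(N(x))=N(x)$, so $-N(x)$ is an additive "inverse" of $N(x)$ that recovers $N(x)$ itself; combined with item~(3) and cancellation this gives $-N(x)=N(x)$.

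I expect the main obstacle to be items~(2) and~(3), where the apparent circularity — $N(N(x))$ wanting to be computed via $(\NN_1)$, which only returns $N(N(x))$ — must be broken by exploiting $(\FI_4)$ and $(\NN_2)$ to produce an independent equation, and by being careful that every use of the cancellation law Proposition~\ref{P:properties1}(1) is applied with an element whose neutrix part is genuinely under control. Once (3) is established, (2), (4), (5), (6) are short cancellation arguments.
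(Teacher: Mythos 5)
Your items (1), (4) and (5) are correct and essentially the paper's arguments: produce two expressions that agree after adding the element to be cancelled, apply the cancellation law of Proposition~\ref{P:properties1}(1), and control the neutrix part of the cancelled element via $(\NN_2)$ and item (1). The genuine gap is in (2)/(3), which you make the logical bottleneck of the whole proposition. Everything there reduces to showing $N(N(y))=N(y)$, and the only equations you actually derive are $N(y)=N(y)+N(N(y))$ (from item (1) applied to $y+N(y)=y$), $(\FI_3)$ and $(\FI_4)$ for the element $N(y)$, and $(\NN_2)$; from these you assert that ``one pins down $N(y)=N(N(y))$''. That inference is not justified: $N(y)=N(y)+N(N(y))$ is exactly what $(\FI_3)$ already says for the element $N(y)$ (as you yourself note, it is merely ``consistent''), so it carries no new information, and nothing you list converts it into $N(N(y))=N(y)$. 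The step can be repaired --- e.g.\ rewrite it as $N(y)+N(N(y))=N(y)+N(y)$, cancel $N(y)$ by Proposition~\ref{P:properties1}(1) to get $N(N(y))+N(N(y))=N(N(y))+N(y)$, and conclude $N(N(y))=N(N(y))+N(y)=N(y)$ by idempotence and commutativity --- but you do not supply such an argument. The paper avoids the issue entirely by proving (2) first and directly: applying $N$ to $(\FI_4)$ for $x$ itself gives $N(N(x))=N(x+(-x))=N(x)+N(-x)=N(x)+N(x)=N(x)$ by item (1), $(\NN_2)$ and idempotence, and (3) is then one line. You had all the ingredients in hand but applied $(\FI_4)$ to $N(y)$ rather than to $y$, which is why your computation never closes.

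On item (6), your first route is circular: to apply item (3) to $-N(x)$ you would need to know that $-N(x)$ is of the form $N(\cdot)$, which is essentially the conclusion. Your second route is sound once written out: $(\FI_4)$ for $N(x)$ gives $N(x)+(-N(x))=N(N(x))=N(x)$, and then $-N(x)=-N(x)+N(-N(x))=-N(x)+N(x)=N(x)$ using $(\FI_3)$, $(\NN_2)$ and item (2); this is a legitimate alternative to the paper's derivation $N(x)=-x+x=-x-(-x)=-(x-x)=-N(x)$ from items (4) and (5). So the overall architecture is viable, but as written the proof of (2)/(3) has a real hole, and (6) needs its second variant made precise.
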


\begin{proof}
\begin{enumerate}
\item One has 
\begin{equation*}
x+y+N(x)+N(y)=x+N(x)+y+N(y)=x+y.
\end{equation*}%
Then by part 1%
\begin{equation}\label{appcl}
N(x+y)+N(x)+N(y)=N(x+y).  
\end{equation}%

By $(\NN_1)$  one has $N(x+y)=N(x)$ or $N(x+y)=N(y)$. Suppose
that $N(x+y)=N(x)$. Then by \eqref{appcl} and Proposition~\ref{P:properties1}, 
\begin{equation*}
N(x+y)=N(x)+N(x)+N(y)=N(x)+N(y).
\end{equation*}%
If $N(x+y)=N(y)$ the proof is analogous.

\item Using Proposition~\ref{P:properties1} and part 1 we have
\[
N(N(x))=N(x-x)=N(x)+N(-x)=N(x)+N(x)=N(x).
\]

\item Using part 2 we derive that $N(x)=N(N(y))=N(y)=x$.
\item We have that 
\begin{equation*}
N(-(-x))=N(-x)=N(x)=-x+x.
\end{equation*}%
Hence%
\[
-(-x) =-(-x)+N(-(-x))=-(-x)-x+x =N(-x)+x=N(x)+x=x.
\]

\item By part 1
\[
-(x+y)+x+y =N(x+y)=N(x)+N(y)=-x+x-y+y=-x-y+x+y.
\]%
Then by Proposition~\ref{P:properties1}
\begin{equation*}
-(x+y)+N(x+y)=-x-y+N(x+y).
\end{equation*}%
Again using part 1 one obtains 
\[
-(x+y)+N(-(x+y)) =-x-y+N(-x)+N(-y) =-x+N(-x)-y+N(-y).
\]

Hence $-(x+y)=-x-y.$

\item 
By part 4 we have
\[
N(x)=-x+x=-x-(-x)=-(x-x)=-N(x). \qedhere
\] 
\end{enumerate}
\end{proof}

\subsection{Involutive flexible meadows are varieties}
In \cite{sacscuza:bergstra2017sotvom}, Bergstra and Bethke studied the relations between involutive meadows and varieties. One of their results is that involutive meadows are varieties. We prove that involutive flexible meadows are also varieties.

Let us start by recalling the definition of varieties in this context, following \cite{universal_algebra}.

\begin{definition}\label{def algebra}
	If $\mathcal{F}$ is a signature, then an \emph{algebra} $A$ of type $\mathcal{F}$ is defined as an ordered pair $(A, F)$, where $A$ is a nonempty set and $F$ is a family of finitary operations on $A$ in the language of $\mathcal{F}$ such that, for each $n$-ary function symbol $f$ in $\mathcal{F}$, there is an $n$-ary operation $f^A$ on $A$. 
\end{definition}

\begin{definition}\label{def variety}
	A nonempty class $K$ of algebras of the same signature is called a \emph{variety} if it is closed under subalgebras, homomorphic images, and direct products. 
\end{definition}

A result by Birkhoff entails that $K$ is a variety if and only if it can be axiomatized by identities.

\begin{definition}\label{def equational class}
	Let $\Sigma$ be a set of identities over the signature $\mathcal{F}$; and define $M(\Sigma)$ to be the class of algebras $A$ satisfying $\Sigma$. A class $K$ of algebras is an \emph{equational class} if there is a set of identities $\Sigma$ such that $K = M(\Sigma)$. In this case we say that $K$ is defined, or axiomatized, by $\Sigma$.
\end{definition}

\begin{theorem}\label{birkhoff's theorem}
	$K$ is a variety if and only if it is an equational class.
\end{theorem}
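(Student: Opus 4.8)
The plan is to prove the two implications of this classical theorem of Birkhoff (the HSP theorem) separately; the forward direction is routine, and the converse carries the real content.

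First, suppose $K = M(\Sigma)$ is an equational class. To see $K$ is a variety I would verify directly that validity of a single identity $p \approx q$ is preserved under each of the three closure operations. If every factor of a direct product satisfies $p \approx q$ then so does the product, since operations are computed coordinatewise and satisfaction of an identity amounts to equality of two induced functions $A^n \to A$; if $A$ satisfies $p \approx q$ and $B$ is a subalgebra of $A$, the restrictions of those functions to $B^n$ still agree; and if $h \colon A \to B$ is a surjective homomorphism, then for any $b_1, \dots, b_n \in B$ we lift them to preimages $a_i \in A$, apply the identity in $A$, and push the equality forward through $h$. Applying this to every member of $\Sigma$ shows $K$ is closed under subalgebras, homomorphic images, and direct products.

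For the converse, assume $K$ is closed under these three operations and let $\Sigma := \mathrm{Id}(K)$ be the set of all identities over $\mathcal{F}$ satisfied by every algebra in $K$. Then $K \subseteq M(\Sigma)$ is immediate, and the task is to show $M(\Sigma) \subseteq K$. Fix $A \in M(\Sigma)$. Writing $T(X)$ for the term algebra (absolutely free algebra) of type $\mathcal{F}$ on a set of variables $X$, take $X := A$; the identity map on $A$ extends uniquely to a surjective evaluation homomorphism $\varepsilon \colon T(A) \to A$, so $A \cong T(A)/\ker \varepsilon$. Now define a congruence $\theta_K$ on $T(A)$ by declaring $p\,\theta_K\,q$ iff the identity $p \approx q$ holds throughout $K$, and set $F := T(A)/\theta_K$. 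The crucial claim is that $F \in K$: one checks that $\theta_K = \bigcap \{ \ker h : h \colon T(A) \to B \text{ a homomorphism}, \ B \in K \}$ — the inclusion $\subseteq$ because an identity valid throughout $K$ is respected by every such $h$, and $\supseteq$ because if $(p,q) \notin \theta_K$ then some $B \in K$ carries an assignment of the variables separating $p$ and $q$, i.e.\ a homomorphism $h$ with $h(p) \neq h(q)$. Since up to isomorphism $T(A)$ has only set-many quotients, one may select a set of such homomorphisms $\{ h_i \colon T(A) \to B_i \}_{i \in I}$ with $\bigcap_{i} \ker h_i = \theta_K$; then the induced map $F \to \prod_{i \in I} B_i$ is an embedding into a product of members of $K$, so by closure under direct products and subalgebras $F \in K$.

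To close the loop: since $A \in M(\Sigma)$ with $\Sigma = \mathrm{Id}(K)$, every pair $(p,q)$ with $p\,\theta_K\,q$ gives an identity valid in $A$, which upon evaluation via $\varepsilon$ yields $\varepsilon(p) = \varepsilon(q)$; hence $\theta_K \subseteq \ker \varepsilon$. Therefore $A \cong T(A)/\ker \varepsilon$ is a homomorphic image of $F = T(A)/\theta_K \in K$, and closure under homomorphic images gives $A \in K$. The main obstacle — the step that genuinely needs care — is the construction of $F$ and the proof that it lies in $K$: one must handle the set-theoretic point that the homomorphisms from $T(A)$ into members of $K$ form a proper class (circumvented by passing to a set of representative quotients) and correctly identify $\theta_K$ with the intersection of their kernels. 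The remainder is bookkeeping with term algebras and congruences.
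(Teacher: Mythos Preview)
Your proof is a correct and standard account of Birkhoff's HSP theorem: the forward direction via preservation of identities under $H$, $S$, $P$, and the converse via the free algebra $F = T(A)/\theta_K$ realized as a subdirect product of members of $K$, followed by the observation that $\theta_K \subseteq \ker\varepsilon$ makes $A$ a quotient of $F$. The set-theoretic care you flag (replacing a proper class of homomorphisms by a set of representative quotients) is the right point to highlight.

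However, there is nothing to compare against: the paper does not prove this theorem. It is stated without proof as a classical result of Birkhoff, with the surrounding definitions taken from the cited universal algebra reference, and is then immediately applied to deduce Corollary~\ref{corollary varieties}. So your proposal supplies a proof where the paper deliberately offers none; the approaches cannot be said to agree or differ.
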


The class $K$ of flexible involutive meadows is axiomatized by the identities in Figure~\ref{tab:ax_flexible_involutive_meadows} over the signature $\Sigma = \{+, \cdot, -, \mbox{}^{-1}, 0, 1, N(\cdot) \}$, where $N$ is a unary function that, when interpreted with the external numbers, corresponds to the neutrix part of a number $x$.
As a consequence of Birkhoff's theorem, we have the following result.

\begin{corollary}\label{corollary varieties}
	Involutive flexible meadows are varieties.
\end{corollary}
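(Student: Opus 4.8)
The plan is to obtain the corollary directly from Birkhoff's theorem (Theorem~\ref{birkhoff's theorem}). First I would fix the signature $\Sigma = \{+, \cdot, -, \mbox{}^{-1}, 0, 1, N(\cdot)\}$ and observe that it consists entirely of finitary operation symbols: $+$ and $\cdot$ are binary, $-$, $\mbox{}^{-1}$ and $N(\cdot)$ are unary, and $0$, $1$ are nullary. Hence an algebra of type $\Sigma$ in the sense of Definition~\ref{def algebra} is exactly a nonempty set carrying two binary, three unary and two nullary operations, which is precisely the kind of structure on which the axioms of Figure~\ref{tab:ax_flexible_involutive_meadows} are interpreted. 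Since the external numbers with the inverse of Definition~\ref{D:inverse} form such a structure and satisfy those axioms (Theorem~\ref{main theorem}), the class $K$ of flexible involutive meadows is nonempty.

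Next I would check, by inspecting Figure~\ref{tab:ax_flexible_involutive_meadows}, that each of $(\FI_1)$--$(\FI_{10})$ is an identity over $\Sigma$ in the sense of Definition~\ref{def equational class}: in every line both sides are terms built from the variables $x, y, z$, the operation symbols of $\Sigma$, and the constants $0$ and $1$, and the axiom asserts their equality. The axioms $(\FI_3)$, $(\FI_4)$, $(\FI_7)$, $(\FI_8)$ and $(\FI_{10})$ involve $N(\cdot)$ and $\mbox{}^{-1}$, but these are function symbols of $\Sigma$, so expressions such as $N(x)$, $x^{-1}$, and $1 + N(x)\cdot x^{-1}$ are legitimate terms; crucially, no inequality, quantifier, or auxiliary predicate occurs anywhere. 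Thus, letting $\Sigma_{\FI}$ be the finite set of these ten identities, we have $K = M(\Sigma_{\FI})$, so $K$ is an equational class.

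Finally I would invoke Theorem~\ref{birkhoff's theorem} to conclude that $K$, being an equational class, is a variety, i.e. is closed under subalgebras, homomorphic images and direct products (Definition~\ref{def variety}); this is the corollary. I do not expect any genuine obstacle here; the only subtlety worth highlighting --- and the reason the passage from involutive meadows to flexible involutive meadows goes through unchanged --- is that $N(\cdot)$ must be taken as a bona fide symbol of the signature rather than as a metalevel abbreviation, so that expressions like $x + N(x) = x$ qualify as honest identities. Once that is granted, the argument is immediate and parallels Bergstra and Bethke's treatment of ordinary involutive meadows in \cite{sacscuza:bergstra2017sotvom}.
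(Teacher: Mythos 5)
Your proposal is correct and follows essentially the same route as the paper: the corollary is obtained by observing that the axioms of Figure~\ref{tab:ax_flexible_involutive_meadows} are identities over the signature $\{+, \cdot, -, \mbox{}^{-1}, 0, 1, N(\cdot)\}$ (with $N$ treated as a genuine unary function symbol), so that flexible involutive meadows form an equational class, and then applying Birkhoff's theorem. The extra details you supply --- nonemptiness via Theorem~\ref{main theorem} and the explicit check that no quantifiers or inequalities occur --- are consistent with, and slightly more careful than, the paper's brief treatment.
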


\subsection{Involutive flexible meadows and commutative von Neumann regular rings}
In the investigation of meadows, the relations with commutative von Neumann regular rings with a multiplicative identity element seem to be of particular interest \cite{BERGSTRA20091261,BERGSTRA2011203}. 

We recall that a semigroup $(S,\cdot)$ is said to be \emph{Von Neumann regular} if

\[
\forall x \in S \, \exists y \in S \left(x \cdot x \cdot y =x \right).
\]

A commutative von Neumann regular ring with a multiplicative identity is a Von Neumann regular commutative semigroup for both addition and multiplication.


Involutive flexible meadows are also commutative von Neumann regular rings with a multiplicative identity element.

\begin{proposition}
Let $M$ be an involutive flexible meadow. Then $M$ is a Von Neumann regular commutative semigroup for both addition and multiplication.
\end{proposition}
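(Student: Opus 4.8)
The plan is to verify the four requirements directly from the axioms in Figure~\ref{tab:ax_flexible_involutive_meadows}, exhibiting explicit Von Neumann witnesses in each of the two semigroups. Commutativity and associativity of $(M,+)$ are immediate from $(\FI_1)$ and $(\FI_2)$, and likewise commutativity and associativity of $(M,\cdot)$ are immediate from $(\FI_5)$ and $(\FI_6)$; so the only genuine content is the regularity condition $\forall x\,\exists y\,(x+x+y=x)$ for addition and $\forall x\,\exists y\,(x\cdot x\cdot y=x)$ for multiplication.

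For the additive part I would take the witness $y=-x$. Using associativity $(\FI_1)$ one rewrites $x+x+(-x)$ as $x+(x+(-x))$, then $(\FI_4)$ gives $x+(-x)=N(x)$, so the expression equals $x+N(x)$, which by $(\FI_3)$ equals $x$. Hence $(M,+)$ is Von Neumann regular.

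For the multiplicative part I would take the witness $y=x^{-1}$. By associativity $(\FI_5)$, $x\cdot x\cdot x^{-1}=x\cdot(x\cdot x^{-1})$, and $(\FI_{10})$ states precisely that $x\cdot(x\cdot x^{-1})=x$. Hence $(M,\cdot)$ is Von Neumann regular, and combining this with the first paragraph shows $M$ is a commutative Von Neumann regular semigroup under both operations.

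I do not expect any real obstacle here: the statement is essentially a repackaging of axioms $(\FI_1)$–$(\FI_6)$, $(\FI_{10})$, and the only mild subtlety is recognizing that $-x$ (rather than some corrected version involving $N(x)$) already serves as the additive Von Neumann partner, which works because $(\FI_3)$ absorbs the residual neutrix term $N(x)$ produced by $(\FI_4)$.
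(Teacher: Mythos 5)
Your proof is correct and is essentially the paper's own argument spelled out in full: the paper simply remarks that the result follows from associativity together with $(\FI_4)$ and $(\FI_{10})$, with the witnesses $-x$ and $x^{-1}$ and the absorption of $N(x)$ via $(\FI_3)$ left implicit. Nothing to add.
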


\begin{proof}
This is a simple consequence of associativity together with axioms $(\FI_{4})$ and $(\FI_{10})$.
\end{proof}

In \cite[Lemma 2.11]{BERGSTRA20091261} it was shown that commutative von Neumann regular rings can be expanded in a unique way to an involutive meadows. Since involutive meadows are also flexible involutive meadows, von Neumann regular rings can be expanded to flexible involutive meadows. The expansion to flexible involutive meadows might not be unique, though, due to the presence of different error terms.

Further research on the connection between commutative von Neumann regular rings and flexible involutive meadows goes beyond the scope of this paper.

\subsection{Solids are involutive flexible meadows}

We finish this section by showing that instead of working with the external numbers, one can use a purely algebraic approach by working with a structure called a \emph{solid}. For a full list of the solid axioms we refer to the appendix in \cite{DinisBerg(17)} or \cite{DinisBerg(ta)}.

The following proposition compiles some results from \cite[Propositions 2.12, 4.8 and Theorem 2.16]{DinisBerg(ta)} and \cite[Proposition~4.12]{DinisBerg(11)} which we use to show that solids are flexible involutive meadows.

\begin{proposition}\label{Prop_solids}
Let $S$ be a solid and let $x, y,z \in S$. 
\begin{enumerate}
\item If $x = e(x)$, then $e(x y) = e(x)y$.
\item If $x \neq e(x)$, then $u(x)e(x) = xe(u(x)) = e(x)$.
\item $x (z + e (y)) = xz + xe (y)$.
\item If $x \neq e(x)$, then $d(d(x))=x$.
\end{enumerate}
\end{proposition}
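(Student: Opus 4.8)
The plan is to prove this ``compilation'' proposition the way it is framed: each of the four assertions is matched to one of the results quoted from \cite{DinisBerg(ta)} and \cite{DinisBerg(11)}, with the solid notation translated as needed. First I would recall the dictionary: in a solid $S$ the operation $e(\cdot)$ is the analogue of the neutrix part $N(\cdot)$, so the side condition $x=e(x)$ singles out the ``neutrix-like'' elements and $x\neq e(x)$ the ``zeroless'' ones; $d(\cdot)$ is the solid division (the analogue of $x\mapsto x^{-1}$); and $u(\cdot)$ is the associated unit, an element infinitely close to $1$ with $x$ a magnitude times $u(x)$. With this dictionary in place, the four parts become recognizable statements about external numbers.

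Parts (1), (3), and (4) should be essentially direct quotations. Part (1) — $e(xy)=e(x)y$ when $x=e(x)$ — is absorption of the neutrix part under multiplication, which I expect is \cite[Proposition 4.8]{DinisBerg(ta)}. Part (3) — $x(z+e(y))=xz+xe(y)$ — is the restricted form of distributivity that survives in a solid even though full distributivity fails, namely \cite[Proposition 2.12]{DinisBerg(ta)}. Part (4) — $d(d(x))=x$ for $x\neq e(x)$ — says division is an involution on the zeroless part, which should be \cite[Theorem 2.16]{DinisBerg(ta)}. For each of these the ``proof'' is a one-line citation together with the remark that the hypothesis in the cited statement (``$x$ is a neutrix'', resp. ``$x$ is zeroless'') is exactly the translation of $x=e(x)$, resp. $x\neq e(x)$.

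Part (2) — $u(x)e(x)=xe(u(x))=e(x)$ for $x\neq e(x)$ — is the one I would argue rather than quote verbatim. Here I would unfold the definitions of $u(x)$ and of $e(\cdot)$ and invoke \cite[Proposition 4.12]{DinisBerg(11)} together with part (1): one checks that $u(x)e(x)$ and $xe(u(x))$ both collapse to $e(x)$, the underlying point being that multiplying a neutrix-like element by the unit $u(x)$ (or by $x$ after applying $e$) returns the same element, since $u(x)$ differs from $1$ only by an error term that gets absorbed. This short computation is where \cite[Proposition 4.12]{DinisBerg(11)} enters, and it is also the only place where two of the cited results have to be combined.

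The main obstacle I anticipate is not mathematical depth but bookkeeping. The solid axioms and their consequences in \cite{DinisBerg(ta)} and \cite{DinisBerg(17)} are stated in a somewhat different notational convention, so the real work is confirming that the symbols $e$, $u$, $d$ used here denote precisely the operations appearing in those references, and that the side conditions $x=e(x)$ and $x\neq e(x)$ correspond exactly to the hypotheses under which the quoted statements are proved, so that the translation is genuinely content-preserving. Once that is verified, the proof is just the four attributions above plus the brief argument for part (2).
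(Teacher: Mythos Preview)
Your reading is correct: the paper does not give any proof of this proposition at all. It is stated without a proof environment, and the sentence introducing it already says that it ``compiles some results from \cite[Propositions 2.12, 4.8 and Theorem 2.16]{DinisBerg(ta)} and \cite[Proposition~4.12]{DinisBerg(11)}''. Your plan --- match each item to one of the cited results via the dictionary $e\leftrightarrow N$, $d\leftrightarrow(\cdot)^{-1}$, $u\leftrightarrow$ unit --- is exactly the intended approach, and in fact goes further than the paper, which leaves the precise attribution of each part implicit. Your extra computation for part~(2) is harmless but not required here; the paper treats all four items uniformly as quotations.
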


\begin{theorem}
Every solid is a flexible involutive meadow.
\end{theorem}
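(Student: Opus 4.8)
The plan is to establish a dictionary between the algebraic vocabulary of solids — the idempotent-part function $e(\cdot)$, the unit function $u(\cdot)$, the duality map $d(\cdot)$ — and the vocabulary of flexible involutive meadows, namely $N(\cdot)$ and $\mbox{}^{-1}$. The natural identifications are $N(x) = e(x)$ (the neutrix part of $x$, which in solid language is the idempotent "generalized zero" attached to $x$) and $x^{-1} = d(x)$ when $x \neq e(x)$, while for $x = e(x)$ one uses whatever the solid axioms prescribe for the inverse of a pure generalized zero (this matches the two-case structure of Definition~\ref{D:inverse}). Once this translation is fixed, the verification of $(\FI_1)$–$(\FI_{10})$ becomes a matter of quoting the solid axioms together with Proposition~\ref{Prop_solids}.

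Concretely, I would proceed axiom by axiom. Axioms $(\FI_1)$, $(\FI_2)$, $(\FI_5)$, $(\FI_6)$ — associativity and commutativity of $+$ and $\cdot$ — are among the basic solid axioms and require nothing. Axiom $(\FI_3)$, $x + N(x) = x$, and $(\FI_4)$, $x + (-x) = N(x)$, are the solid analogues of absorption and "additive pseudo-inverse", again directly from the solid axiom list (they correspond to $(\AAA_2)$ and $(\AAA_1)$ which the excerpt already notes hold for external numbers, and which are solid axioms in the abstract setting). Axiom $(\FI_8)$, the generalized distributivity $x\cdot(y+z) = xy + xz + N(x)y + N(x)z$, is obtained from part (3) of Proposition~\ref{Prop_solids} — $x(z+e(y)) = xz + xe(y)$ — by splitting $y + z$ appropriately and using $e(N(x)y) = N(x)y$; this is the one place where a small computation is needed, and one must be careful about how the $e$-part of a sum behaves (which is exactly what part (1) of Proposition~\ref{Prop_solids} governs). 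Axiom $(\FI_9)$, $(x^{-1})^{-1} = x$, splits into the zeroless case, which is part (4) of Proposition~\ref{Prop_solids} ($d(d(x)) = x$ when $x \neq e(x)$), and the pure-neutrix case, which is Lemma~\ref{lemma for the main theorem}(2) transported to the solid setting (or the corresponding solid axiom for $e(x)^{-1}$).

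The two axioms that carry the real content are $(\FI_7)$, $\bigl(1 + N(x)\cdot x^{-1}\bigr)\cdot x = x$, and $(\FI_{10})$, $x\cdot(x\cdot x^{-1}) = x$. For the zeroless case these follow from the solid version of Proposition~\ref{prop inverse} together with part (2) of Proposition~\ref{Prop_solids}, which says $u(x)e(x) = x e(u(x)) = e(x)$ for $x \neq e(x)$: the key identity is that $N(x)\cdot x^{-1}$ collapses to the idempotent $e$-part, after which absorption finishes the job, exactly mirroring the computations in the proof of Theorem~\ref{main theorem}. For the pure-neutrix case $x = e(x) = N(x)$, both $(\FI_7)$ and $(\FI_{10})$ are precisely parts (1) and (3) of Lemma~\ref{lemma for the main theorem}, whose proofs used only $I \cdot I = I$ and $r^{-1}I \cdot rI = I$ — both of which are available as solid facts (idempotency of $e(x)$ and the behaviour of $d$ on $e$-elements via part (1) of Proposition~\ref{Prop_solids}).

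The main obstacle I anticipate is not any single deep step but rather bookkeeping: making sure the solid-theoretic functions $e$, $u$, $d$ are matched to $N$ and $\mbox{}^{-1}$ in a way that is simultaneously consistent with Definition~\ref{D:inverse} on both branches and with all four items of Proposition~\ref{Prop_solids}, and verifying that the case distinction "$x = e(x)$ versus $x \neq e(x)$" in the solid axioms lines up cleanly with "$x$ is a neutrix versus $x$ is zeroless". Some solid axioms are stated with side conditions like $x \neq e(x)$, so one must confirm that the flexible-meadow axioms, which are unconditional identities, really do hold in the degenerate case too — and this is precisely why Lemma~\ref{lemma for the main theorem} was isolated. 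Once the degenerate case is dispatched by that lemma and the generic case by Proposition~\ref{Prop_solids}, the theorem follows.
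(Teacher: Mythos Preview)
Your proposal is correct and follows essentially the same approach as the paper: set up the dictionary $N(x)=e(x)$, $-x=s(x)$, $x^{-1}=d(x)$, $1=u$, note that most of $(\FI_1)$--$(\FI_{10})$ are then solid axioms verbatim, and treat the non-obvious axioms $(\FI_7)$, $(\FI_9)$, $(\FI_{10})$ by the case split $x\neq e(x)$ (handled via Proposition~\ref{Prop_solids}) versus $x=e(x)$ (handled via Lemma~\ref{lemma for the main theorem}). The paper is slightly more economical in that it treats $(\FI_8)$ as a direct solid axiom rather than deriving it from Proposition~\ref{Prop_solids}(3), but otherwise your outline and the paper's proof coincide.
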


\begin{proof}
Let $S$ be a solid.  
Most of the axioms of flexible involutive meadows are also axioms of solids, by considering $N(x)=e(x)$, $-x=s(x)$, $x^{-1}=d(x)$, and $1=u$. The only non-obvious cases are the cases of axioms $(\FI_7)$, $(\FI_9)$ and $(\FI_{10})$.

For axiom $(\FI_{7})$, if $x \neq e(x)$, using the solid axioms and Proposition~\ref{Prop_solids} we obtain
\[
(1+N(x)\cdot x^{-1})\cdot x =x+e(x)d(x)x=x+e(x)u(x)=x+e(x)=x.
\]

If $x=e(x)$,  the result follows from Lemma~\ref{lemma for the main theorem}(1). 

Axiom $(\FI_9)$ follows from Lemma~\ref{lemma for the main theorem}(2), if $x=e(x)$ and from Proposition~\ref{Prop_solids}(4).

Finally, axiom $(\FI_{10})$ follows easily from the solid axioms if $x \neq e(x)$ and from Lemma~\ref{lemma for the main theorem}(3) if $x=e(x)$.
\end{proof}


As it turns out, and as mentioned above, one is not forced to work in a nonstandard setting. Indeed, any non-archimedean ordered field yields a model of flexible involutive meadows.

Let $\F$ be a non-archimedean ordered field.
Let $\mathcal{C}$ be the set of all convex subgroups for addition of $\F$ and
$Q$ be the set of all cosets with respect to the elements of $\mathcal{C}$. In \cite{DinisBerg(17)} the elements of $\mathcal{C}$ were called \emph{magnitudes} and $Q$ was called the
\emph{quotient class} of $\F$ with respect to $\mathcal{C}$. In the same paper it was also shown that the quotient class of a non-archimedean field is a solid. Hence we have following corollary.

\begin{corollary}
The quotient class of a non-archimedean ordered field is a flexible involutive meadow.
\end{corollary}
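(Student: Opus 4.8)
The final statement is: \emph{The quotient class of a non-archimedean ordered field is a flexible involutive meadow.}

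The plan is to deduce this as an immediate corollary of two facts already in place: first, the theorem (just proved in the excerpt) that every solid is a flexible involutive meadow; and second, the result from \cite{DinisBerg(17)} that the quotient class $Q$ of a non-archimedean ordered field $\F$ with respect to its convex additive subgroups $\mathcal{C}$ is a solid. Chaining these gives the conclusion directly. So the proof is essentially one line: ``By \cite{DinisBerg(17)}, $Q$ is a solid; by the preceding theorem, every solid is a flexible involutive meadow; hence $Q$ is a flexible involutive meadow.''

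The only work worth spelling out is making sure the hypotheses of the ``solids are flexible involutive meadows'' theorem are genuinely met by $Q$, i.e.\ that the signature translation is legitimate: the neutrix-part operation $N(\cdot)$ corresponds to the solid operation $e(\cdot)$ (the idempotent/``magnitude'' part), additive inverse $-x$ to $s(x)$, multiplicative inverse $x^{-1}$ to $d(x)$, and $1$ to the solid unit $u$. Since $Q$ carries exactly these operations by its construction as a quotient class, nothing further is needed. One might also remark, for completeness, that the hyperreal line $\hR$ used earlier is itself a non-archimedean ordered field, so Theorem~\ref{main theorem} is in fact a special case of this corollary; but that remark is optional and not part of the proof.

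I do not expect any real obstacle here: the corollary is a formal consequence of results established earlier in the paper and cited from \cite{DinisBerg(17)}. If there is any subtlety at all, it is purely bookkeeping — checking that the ``solid'' axioms invoked in Proposition~\ref{Prop_solids} and in the proof of ``every solid is a flexible involutive meadow'' do not secretly use a feature of the nonstandard construction that a general non-archimedean field's quotient class might lack. But since \cite{DinisBerg(17)} already certifies that $Q$ is a solid in full generality, this check is subsumed. Accordingly, I would present the proof as:

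\begin{proof}
By \cite{DinisBerg(17)}, the quotient class $Q$ of a non-archimedean ordered field $\F$ with respect to the set $\mathcal{C}$ of its convex additive subgroups is a solid, with the operations $e(\cdot)$, $s(\cdot)$, $d(\cdot)$ and the unit $u$. By the previous theorem, every solid is a flexible involutive meadow, via the identification $N(x) = e(x)$, $-x = s(x)$, $x^{-1} = d(x)$ and $1 = u$. Hence $Q$ is a flexible involutive meadow.
\end{proof}
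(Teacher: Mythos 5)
Your proof is correct and is exactly the paper's argument: the paper cites \cite{DinisBerg(17)} for the fact that the quotient class of a non-archimedean ordered field is a solid, and then concludes via the immediately preceding theorem that every solid is a flexible involutive meadow. Your additional remark about verifying the signature translation $N(x)=e(x)$, $-x=s(x)$, $x^{-1}=d(x)$, $1=u$ is consistent with how that theorem is proved in the paper, so nothing is missing.
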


\section{Further models for meadows using external numbers}\label{S:models}

In the introduction we claimed that the external numbers are particularly suitable for expressing the kind of concepts involved in the definition of the inverse of zero. In order to support that claim, we explore further models for meadows inspired by the external numbers. We start by building a model for involutive flexible meadows over a finite field $\F$ and proceed by constructing a model for common meadows over $\hR$.

\subsection{Finite models of flexible involutive meadows}

In this subsection we show that  any finite field can be extended to a finite model of an involutive flexible meadow.

\begin{definition}
	Given a finite field $\F$ we define $\widehat{\F}$ as follows. 
	\begin{itemize}
		\item For every $a \in \F$, we set $\widehat{a}=a+\infinitesimals$ and $\widehat{\F} = \{ \widehat{a} : a \in \F \}$.
		\item For every nonzero $a \in \F$, we set $\widehat{a}^{-1}=\widehat{a^{-1}}$.
		\item $(\widehat{0})^{-1}=\widehat{0}$ (this definition is motivated by, and indeed coincides with the one in Definition \ref{D:inverse}).
		\item The sum and product over $\widehat{\F}$ are the Minkowski operations introduced in Definition \ref{def minkowski}. 
	\end{itemize}
\end{definition}

\begin{lemma}
	The operations in $\widehat{\F}$ are compatible with those in $\F$.
\end{lemma}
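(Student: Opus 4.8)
The statement should be read as follows: the map $h\colon\F\to\widehat{\F}$ given by $h(a)=\widehat{a}=a+\infinitesimals$ is a bijection under which the Minkowski operations and the extended inverse on $\widehat{\F}$ pull back to the field operations on $\F$, with the extra stipulation $(\widehat{0})^{-1}=\widehat{0}$ extending the partial inverse of $\F$. So the plan is to verify, in turn: (i) $h$ is a bijection; (ii) $\widehat{a}+\widehat{b}=\widehat{a+b}$, with $\widehat{0}$ additively neutral in $\widehat{\F}$; (iii) $\widehat{a}\cdot\widehat{b}=\widehat{ab}$, with $\widehat{1}$ multiplicatively neutral; (iv) $\widehat{a}^{-1}=\widehat{a^{-1}}$ for $a\neq0$ and $(\widehat{0})^{-1}=\widehat{0}$, checking in addition that these coincide with Definition~\ref{D:inverse}. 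Each identity simultaneously shows that $\widehat{\F}$ is closed under the operation in question.

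For (i), $h$ is surjective onto $\widehat{\F}$ by the definition of $\widehat{\F}$, and injective because $a+\infinitesimals=b+\infinitesimals$ with $a,b\in\F$ forces $a=b$; this injectivity is also what makes the assignment $\widehat{a}^{-1}=\widehat{a^{-1}}$ well defined, since each element of $\widehat{\F}$ then has a unique representative in $\F$. For (ii) and (iii) I would expand Definition~\ref{def minkowski}, recording beforehand the three standard facts about $\infinitesimals$: it is an additive subgroup, so $\infinitesimals+\infinitesimals=\infinitesimals$; $a\cdot\infinitesimals\subseteq\infinitesimals$ for each $a\in\F$; and it is idempotent, $\infinitesimals\cdot\infinitesimals=\infinitesimals$. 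Then $\widehat{a}+\widehat{b}=(a+b)+(\infinitesimals+\infinitesimals)=(a+b)+\infinitesimals=\widehat{a+b}$ and $\widehat{a}\cdot\widehat{b}=ab+a\infinitesimals+b\infinitesimals+\infinitesimals\cdot\infinitesimals=ab+\infinitesimals=\widehat{ab}$, and specializing $b$ to $0$ and to $1$ exhibits $\widehat{0}=\infinitesimals$ and $\widehat{1}=1+\infinitesimals$ as the neutral elements.

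For (iv), $\widehat{a}^{-1}=\widehat{a^{-1}}$ holds by definition for $a\neq0$, and $(\widehat{0})^{-1}=\widehat{0}$ by definition; what remains is to confirm consistency with Definition~\ref{D:inverse}. When $a\neq0$, $\widehat{a}=a+\infinitesimals$ is zeroless, so by Proposition~\ref{prop inverse} its inverse is $a^{-1}+a^{-2}\cdot\infinitesimals=a^{-1}+\infinitesimals=\widehat{a^{-1}}$. When $a=0$, $\widehat{0}=0+\infinitesimals$ has neutrix part $\infinitesimals$, which, being idempotent, equals $1\cdot\infinitesimals$; the second branch of Definition~\ref{D:inverse} with $r=1$ then gives $(\widehat{0})^{-1}=1^{-1}\cdot\infinitesimals=\infinitesimals=\widehat{0}$. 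All of this is routine Minkowski arithmetic; the only points that genuinely need to be pinned down are the injectivity of $h$ and the three closure properties of $\infinitesimals$, after which the lemma follows.
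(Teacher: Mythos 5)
Your proof is correct and follows essentially the same route as the paper's: expand the Minkowski sum and product and use that $\infinitesimals$ is an idempotent additive subgroup absorbed by multiplication with elements of $\F$. The additional checks (injectivity of $a\mapsto\widehat{a}$, neutral elements, and consistency of $(\widehat{0})^{-1}=\widehat{0}$ with Definition~\ref{D:inverse}) go beyond what the paper verifies but are harmless and, if anything, make the statement of ``compatibility'' more precise.
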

\begin{proof}
	Let $a,b \in\F$. Then 
	$$
	\widehat{a+b} = (a + b) + \infinitesimals
	=
	(a+\infinitesimals) + (b+\infinitesimals)
	=
	\widehat{a}+\widehat{b}.
	$$
	Since $a \cdot \infinitesimals = b \cdot \infinitesimals =\infinitesimals$ we also have
	$$
	\widehat{a\cdot b} = a \cdot b + \infinitesimals
	=
	(a+\infinitesimals) \cdot (b+\infinitesimals)
	=
	\widehat{a}\cdot\widehat{b}.
	$$
	
	Moreover, if $a\ne0$, then 
	$\widehat{a}^{-1}=\widehat{a^{-1}} = a^{-1} + \infinitesimals$.	
\end{proof}

\begin{theorem}
	Let $\F$ be a finite field. Then $\widehat{\F}$ satisfies the axioms for flexible involutive meadows.
\end{theorem}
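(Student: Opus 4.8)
The plan is to verify the axioms $(\FI_1)$--$(\FI_{10})$ directly for $\widehat{\F}$, exploiting the fact that $\widehat{\F}$ is essentially an isomorphic copy of $\F$ with a uniform error term attached, namely the neutrix $\infinitesimals$. The crucial structural observation, which I would establish first, is that for every $\widehat{a} \in \widehat{\F}$ the neutrix part satisfies $N(\widehat{a}) = \infinitesimals$ (since $a \in \F \subseteq \finite$ and $a + \infinitesimals$ has $\infinitesimals$ as its neutrix part), and that $\infinitesimals$ is an idempotent neutrix with $\infinitesimals + \infinitesimals = \infinitesimals$, $a \cdot \infinitesimals = \infinitesimals$ for every nonzero $a \in \F$, and $\infinitesimals \cdot \infinitesimals = \infinitesimals$. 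In particular $\infinitesimals$ absorbs $\widehat{\F}$ under the relevant operations. This is the reduction that makes all the axioms routine.

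Next I would check the additive axioms $(\FI_1)$--$(\FI_4)$: associativity and commutativity of $+$ on $\widehat{\F}$ follow from the compatibility lemma (the map $a \mapsto \widehat{a}$ is a homomorphism for $+$) together with associativity and commutativity in $\F$; axiom $(\FI_3)$, namely $\widehat{a} + N(\widehat{a}) = \widehat{a}$, becomes $(a+\infinitesimals)+\infinitesimals = a + \infinitesimals$, which holds since $\infinitesimals$ is a subgroup; and $(\FI_4)$, namely $\widehat{a} + (-\widehat{a}) = N(\widehat{a})$, becomes $(a - a) + \infinitesimals = \infinitesimals$. Similarly the multiplicative axioms $(\FI_5)$, $(\FI_6)$ follow from the compatibility lemma and the corresponding identities in $\F$. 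For the generalized distributivity $(\FI_8)$, I would compute both sides using Definition~\ref{def minkowski}: the left side $\widehat{a} \cdot (\widehat{b} + \widehat{c}) = a(b+c) + \infinitesimals$, and the right side $\widehat{a}\widehat{b} + \widehat{a}\widehat{c} + N(\widehat{a})\widehat{b} + N(\widehat{a})\widehat{c} = (ab + ac) + \infinitesimals$ (the neutrix terms all collapse to $\infinitesimals$), and these agree by distributivity in $\F$. Note that ordinary distributivity also holds here, so $(\FI_8)$ is in fact automatic; the generalized form is only needed in richer models.

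It remains to handle the inverse axioms $(\FI_7)$, $(\FI_9)$, $(\FI_{10})$, splitting into the cases $a \neq 0$ and $a = 0$. When $a = 0$, all three reduce to statements about the neutrix $\widehat{0} = \infinitesimals$ with $\widehat{0}^{-1} = \infinitesimals = 1^{-1}\cdot\infinitesimals$, and these are exactly the three assertions of Lemma~\ref{lemma for the main theorem} applied with $x = \infinitesimals$ (which is of the form $r \cdot I$ with $I = \infinitesimals$ idempotent), so nothing new is needed. When $a \neq 0$, we have $\widehat{a}^{-1} = \widehat{a^{-1}} = a^{-1} + \infinitesimals$; then $\widehat{a}\cdot\widehat{a}^{-1} = \widehat{a a^{-1}} = \widehat{1} = 1 + \infinitesimals$, so $N(\widehat{a})\cdot\widehat{a}^{-1} = \infinitesimals \cdot (a^{-1} + \infinitesimals) = \infinitesimals$, and hence $(1 + N(\widehat{a})\cdot\widehat{a}^{-1})\cdot\widehat{a} = (1 + \infinitesimals)\cdot(a + \infinitesimals) = a + \infinitesimals = \widehat{a}$, giving $(\FI_7)$; axiom $(\FI_9)$ is $(\widehat{a}^{-1})^{-1} = \widehat{(a^{-1})^{-1}} = \widehat{a}$ using that $(a^{-1})^{-1} = a$ in $\F$; and $(\FI_{10})$ is $\widehat{a}\cdot(\widehat{a}\cdot\widehat{a}^{-1}) = (a+\infinitesimals)\cdot(1+\infinitesimals) = a + \infinitesimals = \widehat{a}$. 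I do not anticipate a serious obstacle; the only point requiring minor care is making sure the neutrix part of a product such as $\infinitesimals \cdot \widehat{a}$ is computed correctly via the Minkowski product, i.e. that $\infinitesimals \cdot (a + \infinitesimals) = \infinitesimals$ when $a$ is a nonzero element of $\F$ (hence limited and non-infinitesimal), which is where the assumption that $\F$ is a genuine field — so all its nonzero elements are units and, viewed inside $\hR$, are limited and appreciable — is used.
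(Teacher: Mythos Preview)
Your proposal is correct and follows essentially the same strategy as the paper: both rely on the compatibility lemma (so that all axioms not involving the inverse reduce to the field axioms of $\F$, with the uniform error term $\infinitesimals$ absorbing itself), and both handle the inverse axioms by a case split on whether $a=0$ or $a\neq 0$. Your treatment is simply more explicit---in particular you spell out the computations for $(\FI_7)$, $(\FI_9)$, $(\FI_{10})$ and invoke Lemma~\ref{lemma for the main theorem} for the $a=0$ case, whereas the paper's argument is terser and (somewhat loosely) phrases the verification in terms of the $(\I)$ axioms rather than the $(\FI)$ axioms, which coincide here because $N(\widehat{a})=\infinitesimals=\widehat{0}$ is constant.
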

\begin{proof}
	If $x \ne \widehat{0}$, the axioms are satisfied since $\F$ is a field and the operations in $\widehat{\F}$ are compatible with the ones in $\F$.
	
	If $x = \widehat{0}$, axioms $(\I_1) - (\I_8)$ follow from the fact that $\F$ is a field.
	
	As for axiom $(\I_9)$, if $x = \widehat{0}$, then $\widehat{0}^{\ -1} = \widehat{0}$, so that $(\widehat{0}^{\ -1})^{-1} = \widehat{0}^{\ -1} = \widehat{0}$.
	
	Finally, for axiom $(\I_{10})$, if $x = \widehat{0}$, then $\widehat{0} \cdot \widehat{0}^{\ -1} = 0$.
\end{proof}

\begin{remark}
	Similar models for involutive meadows can be obtained without recurring to infinitesimals, as one could define the alternative model $\tilde{\F}$ by requiring the existence of an element $E \notin \F$ and defining, for each $a \in \F$ the element $\tilde{a}=a+E$ and the set  $\tilde{\F}:=\{\tilde{a}: a \in \F\}$ with the operations
	
	\[
	\tilde{a}+\tilde{b}=(a+E)+(b+E)=(a+b)+E,
	\] 
	(note that, in particular $E+E=(0+E)+(0+E)=(0+0)+E=0+E=E$),
	\[
	\tilde{a}\cdot \tilde{b}=(a+E)\cdot (b+E)=(a\cdot b)+E
	\]
	
	and 
	
	\[
	\tilde{a}^{-1}=a^{-1}+E
	\]
	
	and, finally,
	
	$$\tilde{0}^{-1}=0+E=\tilde{0}.$$
	
	By adding $E$ and defining $0^{-1}=E$, the resulting structure is a common meadow. If, alternatively, we were to define $0^{-1}=0$, the resulting structure is an involutive meadow. 
\end{remark}

\subsection{A model for common meadows based on $\R$}\label{sec_model}

In this section we introduce a model $\modelR$ for the axioms of common meadows given in Figure~\ref{tab:abc}. In our model, the elements of $\R$ will be represented by external numbers, while $\hR$ will act as an inverse of (the representative of) $0$. The fact that the inverse of $0$ has, in a sense, the maximum possible uncertainty is in good accord with both  the intuition that division by $0$ introduces an error term, and to the common practice of having the inverse of $0$ not being a member of the original field \cite{Bergstra2015}.

The inverse of $\hR$ is again $\hR$. This choice can be justified in two ways. We can interpret $\hR^{-1}$ as the smallest neutrix collecting all the inverses of the elements of $\hR$ or, alternatively, since $0 \in \hR$, the inverse of $\hR$ should also be maximal. 


\begin{definition}
	We define the set $\modelR$ as follows:
	\begin{itemize}
		\item For every $r \in \R$, we set $\widehat{r}=r+\infinitesimals \in \modelR$.
		\item $\hR \in \modelR$.
		\item For every nonzero $r \in \R$, we set $\widehat{r}^{-1} =\widehat{1}\cdot{\widehat{r}^{-1}}$ (with the quotient introduced in Definition \ref{def quotient}).
		\item We define $(\widehat{0})^{-1}=\hR$ and $\hR^{-1} = \hR$.
		\item The sum and product over $\modelR$ are the Minkowski operations introduced in Definition \ref{def minkowski}. 
	\end{itemize}
\end{definition}

In the construction of real numbers as limits of Cauchy sequences, a real number can be seen as being determined up to ``infinitesimals", i.e. up to a sequence converging to $0$. In the model of common meadows introduced in the previous definition, this idea is expressed by the representation of $r$ as $\hat{r}=r+\infinitesimals$.

An immediate consequence of the previous definition is that for every $x \in \modelR$ we have
\begin{equation}\label{equation sum and product with hR}
	x + (\widehat{0})^{-1} = (\widehat{0})^{-1} + x
	=
	x \cdot (\widehat{0})^{-1} = (\widehat{0})^{-1} \cdot x
	= \hR.
\end{equation}

In the next lemma, we establish that the operations in $\modelR$ are compatible with those in $\R$.

\begin{lemma}\label{lemma operations commute with hat}
	For every $r, s \in \R$,
	\begin{itemize}
		\item $\widehat{r+s}=\widehat{r}+\widehat{s}$;
		\item $\widehat{r\cdot s}=\widehat{r}\cdot\widehat{s}$.
	\end{itemize}
	Moreover, for every nonzero $r \in \R$, $\widehat{r}^{-1} = \widehat{r^{-1}}$.
\end{lemma}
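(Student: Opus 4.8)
The plan is to verify the three identities in turn, in each case reducing everything to a handful of elementary facts about the neutrix $\oslash$. Specifically I will use: that $\oslash$ is an additive subgroup, so $\oslash+\oslash=\oslash$; that for every standard real $r$ one has $r\cdot\oslash\subseteq\oslash$ (a limited number times an infinitesimal is infinitesimal), with equality $r\cdot\oslash=\oslash$ whenever $r$ is appreciable (apply the inclusion to $r$ and to $r^{-1}$); and that $\oslash$ is idempotent, $\oslash\cdot\oslash=\oslash$, since any infinitesimal $\varepsilon$ factors as $\mathrm{sign}(\varepsilon)\sqrt{|\varepsilon|}\cdot\sqrt{|\varepsilon|}$, a product of two infinitesimals, while $0=0\cdot 0$. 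I will also invoke the trivial observation that a Minkowski sum $A_1+\dots+A_n$ of neutrices in which each $A_i\subseteq\oslash$ and some $A_j$ equals $\oslash$ is itself equal to $\oslash$: it contains $A_j$ (take the other coordinates to be $0$) and is contained in $\oslash+\dots+\oslash=\oslash$.

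For the additive identity I would simply unfold the Minkowski sum of Definition~\ref{def minkowski}: $\widehat r+\widehat s=(r+\oslash)+(s+\oslash)=(r+s)+(\oslash+\oslash)=(r+s)+\oslash=\widehat{r+s}$. For the multiplicative identity I would again unfold, $\widehat r\cdot\widehat s=(r+\oslash)(s+\oslash)=rs+r\oslash+s\oslash+\oslash\cdot\oslash$, and note that each of the three neutrices $r\oslash$, $s\oslash$, $\oslash\cdot\oslash$ is contained in $\oslash$ while $\oslash\cdot\oslash=\oslash$; by the observation above their sum is $\oslash$, so $\widehat r\cdot\widehat s=rs+\oslash=\widehat{rs}$. (When $r$ or $s$ vanishes the corresponding term degenerates to $\{0\}$, but the summand $\oslash\cdot\oslash=\oslash$ still forces the neutrix part to be exactly $\oslash$, which also covers the case $r=s=0$.)

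For the inverse, fix a nonzero $r\in\R$. Combining the definition of $\widehat r^{-1}$ in $\modelR$ with Definition~\ref{def quotient}, one has $\widehat r^{-1}=\{x\in\hR:x\widehat r\subseteq\widehat 1\}$, and it suffices to show this set equals $r^{-1}+\oslash=\widehat{r^{-1}}$. For the inclusion $\subseteq$: if $x\widehat r\subseteq\widehat 1$ then $xr=xr+0\in x\widehat r\subseteq 1+\oslash$, so $xr\simeq 1$; since $r$ is appreciable this forces $x\simeq r^{-1}$, i.e.\ $x\in\widehat{r^{-1}}$. For the inclusion $\supseteq$: if $x=r^{-1}+\varepsilon$ with $\varepsilon\in\oslash$, then $x\widehat r=(r^{-1}+\varepsilon)(r+\oslash)=1+r^{-1}\oslash+\varepsilon r+\varepsilon\oslash=1+\oslash=\widehat 1\subseteq\widehat 1$, using $r^{-1}\oslash=\oslash$, $\varepsilon r\in\oslash$ and $\varepsilon\oslash\subseteq\oslash$; hence $x\in\widehat r^{-1}$. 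This gives $\widehat r^{-1}=\widehat{r^{-1}}$.

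None of these steps is genuinely hard — the lemma is essentially a bookkeeping check. The one point that deserves a little care, and really the only place anything nontrivial is used, is making sure the neutrix parts come out \emph{equal} to $\oslash$ rather than merely contained in it; this is exactly where the idempotency $\oslash\cdot\oslash=\oslash$ (for the product, in particular in the degenerate case $r=s=0$) and the identity $r\oslash=\oslash$ for appreciable $r$ (for the inverse, and for products with $r,s\neq 0$) come into play.
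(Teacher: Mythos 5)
Your proof is correct. For the two Minkowski identities it coincides with the paper's argument (the paper's justification for the product, ``taking into account that $r$ and $s$ are real numbers,'' is in fact less explicit than yours about why the neutrix part comes out exactly $\oslash$ when $r$ or $s$ is zero; your appeal to $\oslash\cdot\oslash=\oslash$ fills that in cleanly). For the inverse you take a genuinely different, more elementary route: you unfold Definition~\ref{def quotient} directly and prove $\{x : x\widehat{r}\subseteq\widehat{1}\}=r^{-1}+\oslash$ by double inclusion, whereas the paper instead invokes Proposition~\ref{prop inverse} (the formula $(a+A)^{-1}=a^{-1}+a^{-2}A$ for zeroless external numbers, imported from the external-numbers literature) and computes $\widehat{1}\cdot\widehat{r}^{-1}=(1+\oslash)\left(\tfrac{1}{r}+\tfrac{\oslash}{r^{2}}\right)=\tfrac{1}{r}+\oslash$. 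Your version is self-contained and makes transparent exactly where the hypothesis that $r$ is a nonzero (hence appreciable) standard real is used; the paper's is shorter but leans on the cited inverse formula. Both are valid proofs of the same statement.
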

\begin{proof}
	The first two properties are a consequence of the following equalities
	$$
	\widehat{r+s} = r + s + \infinitesimals
	=
	(r+\infinitesimals) + (s+\infinitesimals)
	=
	\widehat{r}+\widehat{s}.
	$$
	and, taking into account that $r$ and $s$ are real numbers,
	$$
	\widehat{r\cdot s} = r \cdot s + \infinitesimals
	=
	(r+\infinitesimals) \cdot (s+\infinitesimals)
	=
	\widehat{r}\cdot\widehat{s}.
	$$
	
	As for the inverse, if $r\ne0$,
	$\widehat{r^{-1}} = \frac{1}{r} + \infinitesimals$, whereas, by Proposition \ref{prop inverse},
	\[
	\widehat{r}^{-1}
	=
	\frac{1+\infinitesimals}{r+\infinitesimals}
	=
	(1+\infinitesimals)\left(\frac{1}{r}+\frac{\infinitesimals}{r^2}\right)
	=
	(1+\infinitesimals)\left(\frac{1}{r} + \infinitesimals\right)
	=
	\frac{1}{r} + \infinitesimals.
	\qedhere \]
\end{proof}

\begin{corollary}\label{corollary that might be useful}
	For every $r \in \R$,
	\begin{enumerate}
		\item if $r\ne 0$, then $\widehat{r}\cdot\widehat{r}^{-1}=\widehat{1}$;
		\item if $r\ne 0$, then $\widehat{0} \cdot \widehat{r}^{-1} = \widehat{0}$;
		\item $\widehat{1}=\widehat{1}+\widehat{0}\cdot\widehat{r}$;
		\item $\widehat{r} + \hR = \hR + \widehat{r} = \hR$ and $\widehat{r} \cdot \hR =  \hR \cdot \widehat{r} = \hR$.
		\item $\hR + \hR = \hR - \hR = \hR \cdot \hR = \hR$.
	\end{enumerate}
\end{corollary}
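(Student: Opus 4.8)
The plan is to verify each of the five items directly from the definition of $\modelR$ and Lemma~\ref{lemma operations commute with hat}, treating separately the ``finite'' representatives $\widehat{r}$ and the special element $\hR$. For items (1)--(3), the elements involved are all of the form $\widehat{s}$ for real $s$, so I would reduce everything to identities in the field $\R$ via Lemma~\ref{lemma operations commute with hat}. For items (4) and (5), the key facts are the absorption identities for $\hR$, which follow from \eqref{equation sum and product with hR} together with the elementary computation $\widehat{r} + \hR = (r + \infinitesimals) + \hR = r + \hR = \hR$ (using that $\hR$ is closed under translation by a hyperreal) and similarly $\widehat{r}\cdot\hR = \hR$ when $r \neq 0$, with the case $r = 0$ covered by \eqref{equation sum and product with hR} applied to $x = \widehat{0}$.

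For item (1): since $r \neq 0$, by Lemma~\ref{lemma operations commute with hat} we have $\widehat{r}^{-1} = \widehat{r^{-1}}$, hence $\widehat{r}\cdot\widehat{r}^{-1} = \widehat{r}\cdot\widehat{r^{-1}} = \widehat{r \cdot r^{-1}} = \widehat{1}$, again using Lemma~\ref{lemma operations commute with hat}. Item (2) is analogous: $\widehat{0}\cdot\widehat{r}^{-1} = \widehat{0}\cdot\widehat{r^{-1}} = \widehat{0 \cdot r^{-1}} = \widehat{0}$. Item (3) is immediate from $\widehat{0}\cdot\widehat{r} = \widehat{0 \cdot r} = \widehat{0}$ and $\widehat{1} + \widehat{0} = \widehat{1 + 0} = \widehat{1}$.

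For item (4), I would first record that $\hR$, being a convex additive subgroup containing all hyperreals, absorbs translations: for any hyperreal $a$, $a + \hR = \hR$; consequently $\widehat{r} + \hR = (r+\infinitesimals) + \hR = \hR + \infinitesimals = \hR$, and commutativity of the Minkowski sum gives the other order. For the product, if $r \neq 0$ then $\widehat{r}\cdot\hR = r\hR + \infinitesimals\cdot\hR = \hR$ since $r\hR = \hR$; if $r = 0$ then $\widehat{r} = \widehat{0}$ and $\widehat{0}\cdot\hR = \hR$ is exactly \eqref{equation sum and product with hR} with $x = \widehat{0}$, recalling $(\widehat{0})^{-1} = \hR$. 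Finally item (5) is the special case of the absorption identities with the element $\hR$ itself: $\hR + \hR = \hR$ and $\hR\cdot\hR = \hR$ because $\hR$ is a subgroup closed under multiplication (it is a ring, indeed a field), and $\hR - \hR = \hR$ since $-\hR = \hR$.

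I do not anticipate a genuine obstacle here; the statement is a routine bookkeeping corollary. The only point requiring a modicum of care is the product $\widehat{r}\cdot\hR$ in item (4) when $r = 0$: one must not attempt to argue via $0 \cdot \hR$ naively but instead invoke \eqref{equation sum and product with hR}, which was established precisely to handle the interaction of arbitrary elements of $\modelR$ with $(\widehat{0})^{-1} = \hR$. Once that case split is made explicit, every remaining verification is a one-line Minkowski computation.
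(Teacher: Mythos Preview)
Your proposal is correct and follows essentially the same approach as the paper: items (1)--(3) are reduced to field identities in $\R$ via Lemma~\ref{lemma operations commute with hat}, and items (4)--(5) come from the absorption identity \eqref{equation sum and product with hR}. Your case split in item (4) on $r=0$ versus $r\ne 0$ is unnecessary, since \eqref{equation sum and product with hR} is stated for \emph{every} $x\in\modelR$ (in particular for $x=\widehat{r}$ with any real $r$, and for $x=\hR$), so both (4) and (5) are already immediate from that equation without further computation.
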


Equalities (1)--(3) in Corollary \ref{corollary that might be useful} can be obtained from the corresponding equalities for real numbers by repeated use of Lemma \ref{lemma operations commute with hat}.

\begin{theorem}
	$\modelR$ is a model of axioms $(\M_1)$--$(\M_{14})$ of common meadows.
\end{theorem}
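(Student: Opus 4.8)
The plan is to fix the interpretation $0 \mapsto \widehat{0}$, $1 \mapsto \widehat{1}$, $\abf \mapsto \hR$ of the constants, with $-(\cdot)$ and $(\cdot)^{-1}$ as in the definition of $\modelR$, and then to verify $(\M_1)$--$(\M_{14})$ by a case distinction according to whether the elements substituted for the variables are \emph{finite} — of the form $\widehat{r}$ with $r \in \R$ — or equal to $\hR$. First I would record that $\modelR$ is closed under all four operations: this is immediate from Lemma~\ref{lemma operations commute with hat} and Corollary~\ref{corollary that might be useful} (for instance $\widehat{r}\cdot\widehat{s} = \widehat{rs}$, $\widehat{r} + \hR = \hR$, $\hR^{-1} = \hR$), together with the elementary identities $-\widehat{r} = \widehat{-r}$, which holds because $-\infinitesimals = \infinitesimals$, and $-\hR = \hR$. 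Axioms $(\M_{13})$ and $(\M_{14})$ then hold outright: $(\widehat{0})^{-1} = \hR = \abf$ by definition, and $x + \hR = \hR$ by Corollary~\ref{corollary that might be useful}(4)--(5).

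\emph{The finite case.} If every variable occurring in a given axiom is replaced by some $\widehat{r}$, then Lemma~\ref{lemma operations commute with hat} lets me pull $\widehat{(\cdot)}$ through $+$ and $\cdot$; since $r \mapsto \widehat{r}$ is an injection preserving $+, \cdot, -, 0, 1$ and sending each nonzero $r$ to $\widehat{r}^{-1} = \widehat{r^{-1}}$, each of $(\M_1)$--$(\M_9)$ then collapses to the corresponding identity holding in the field $\R$. The one point worth stressing is $(\M_8)$: distributivity fails for external numbers in general (cf.\ Proposition~\ref{P:extnumbersFIM}(7)), but holds on $\modelR$ because multiplication is \emph{exact} on the finite part, $\widehat{r}\,\widehat{s} = \widehat{rs}$ with no error term produced, a consequence of $\infinitesimals$ being idempotent and of $r\,\infinitesimals \subseteq \infinitesimals$. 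For the axioms $(\M_{10})$--$(\M_{12})$, which mention $x^{-1}$, I would split further: if the relevant variable is $\widehat{r}$ with $r \neq 0$, then Corollary~\ref{corollary that might be useful}(1)--(3) does the work — e.g.\ for $(\M_{10})$ one has $\widehat{r}\,\widehat{r}^{-1} = \widehat{1} = \widehat{1} + \widehat{0} = \widehat{1} + \widehat{0}\cdot\widehat{r}^{-1}$ — while the remaining subcase, where the variable is $\widehat{0}$ so that $x^{-1} = \hR$, is handled by the absorption argument below.

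\emph{The error case.} Suppose at least one substituted element is $\hR$; this also covers the $x = \widehat{0}$ subcase just mentioned, via $x^{-1} = \hR$. By \eqref{equation sum and product with hR} and Corollary~\ref{corollary that might be useful}(4)--(5), $\hR$ is absorbing for both $+$ and $\cdot$ — that is, $x + \hR = \hR$ and $x \cdot \hR = \hR$ for every $x \in \modelR$, including $x = \widehat{0}$ — and moreover $\hR^{-1} = \hR$ and $-\hR = \hR$. Hence every term in which $\hR$ occurs evaluates to $\hR$, so both sides of each axiom reduce to $\hR$. The only spots where an explicit line is actually needed are $(\M_4)$, where $\hR + (-\hR) = \hR = \widehat{0}\cdot\hR$, and the subcase $x = \widehat{0}$ of $(\M_{10})$--$(\M_{12})$, where $x^{-1} = \hR$ and absorption closes it — e.g.\ $(\M_{10})$ becomes $\widehat{0}\cdot\hR = \hR = \widehat{1} + \widehat{0}\cdot\hR$.

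The step I expect to be the main obstacle is not any single axiom but keeping straight the three distinct roles of ``zero'': the true zero of $\R$, the generalized zero $\widehat{0} = \infinitesimals$, and the error term $\hR$ — in particular the fact that $\widehat{0}\cdot\widehat{r} = \widehat{0}$ for finite $r$ whereas $\widehat{0}\cdot\hR = \hR$, which is exactly what makes the error-propagation axioms $(\M_{10})$ and $(\M_{12})$ come out right. The other place requiring care is $(\M_8)$: checking that distributivity survives on $\modelR$, unlike on arbitrary external numbers, precisely because the product is exact on the finite part. Everything else is bookkeeping already packaged into Lemma~\ref{lemma operations commute with hat} and Corollary~\ref{corollary that might be useful}.
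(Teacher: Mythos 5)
Your proposal is correct and follows essentially the same strategy as the paper: a case split between elements of the form $\widehat{r}$ (handled by the compatibility of the hat map with the field operations, i.e.\ Lemma~\ref{lemma operations commute with hat} and Corollary~\ref{corollary that might be useful}) and the absorbing element $\hR$ (handled by \eqref{equation sum and product with hR}), with the same special treatment of $(\M_{10})$--$(\M_{12})$ in the subcases $x=\widehat{0}$ and $x=\hR$. Your remark that $(\M_8)$ holds because multiplication is exact on the finite part is precisely the paper's observation that there is only one order of magnitude besides $\hR$.
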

\begin{proof}
	We start by showing that axiom $(\M_1)$ is satisfied. If $x, y$ and $z$ are different from $\hR$, then this is a consequence of Lemma \ref{lemma operations commute with hat}. If at least one of  $x, y$ and $z$ is equal to $\hR$, then both sides of the equality evaluate to $\hR$ by Corollary \ref{corollary that might be useful}. The proof follows similar steps for axioms $(\M_{2}) - (\M_9)$. Note that, for axiom $(\M_8)$, we use also the fact that we have only one order of magnitude besides $\hR$.
	
	
	
	
	
	
	
	
	
	Let us show that axiom $(\M_{10})$ is satisfied.  If $x = \widehat{0}$:
	$$\widehat{0} \cdot (\widehat{0})^{-1} = \hR = \widehat{1} + \widehat{0} \cdot \hR.$$
	If $x = \hR$: as a consequence of the definition of $\hR^{-1}$, we have
	$$\hR \cdot \hR^{-1} = \hR = \widehat{1} + \widehat{0} \cdot \hR = \widehat{1} + \widehat{0} \cdot \hR^{-1}.$$
	
	We now turn to axiom $(\M_{11})$. If $x, y$ are not equal to $0$ nor to $\hR$, the axiom holds as a consequence of Lemma \ref{lemma operations commute with hat}. Otherwise, due to the definition of the inverse, both sides are equal to $\hR$.
	
	For axiom $(\M_{12})$, if $x = \widehat{r}$ for some $r \in \R$, the axiom holds as a consequence of Corollary \ref{corollary that might be useful} (3). If $x = \hR$, then
	$$(\widehat{1}+\widehat{0}\cdot \hR)^{-1} = (1+\hR)^{-1} = \hR^{-1} = \hR = \widehat{1} + \hR = \widehat{1} + \widehat{0} \cdot \hR.$$
	
	Axiom $(\M_{13})$ is satisfied as a consequence of the definition of $(\widehat{0})^{-1}$. Finally, axiom $(\M_{14})$ is satisfied as a consequence of \eqref{equation sum and product with hR}.
\end{proof}

\section{Final remarks and open questions}

In this paper we have introduced the notion of flexible involutive meadow, by means of an equational axiomatization, and constructed some models based on the external numbers and non-archimedean fields. We have also shown a model for common meadows based on the real numbers and of involutive meadows based on finite fields. We would like to point out that, with similar techniques, one could also obtain meadows based on rational numbers. 

The model for common meadows developed in Section \ref{sec_model} suggests that it is possible to study a flexible version of common meadows, in the spirit of what has been done in Section \ref{S:flexible} for involutive meadows. In order to do so, it is possible to adapt the axioms by replacing $0$ with $N(x)$, where $N(x)$ is an error term analogous to that of flexible involutive meadows.

In the context of external numbers, where $N(x)$ is the neutrix part of $x$, and for zeroless $x$, one has the inclusion $N(x) \subseteq x \cdot \infinitesimals$. This grounds the interpretation of the flexible counterparts of axioms ($\M_4$), ($\M_{10}$) and ($\M_{12}$).
Moreover, as in the model discussed in Section \ref{sec_model}, the element $\abf$ can be taken as $\hR$. 

To conclude, we mention some possible directions of future work.

We would like to study related variants of meadows as well as their algebraic properties. For example, the study of \emph{flexible} cancellation meadows, i.e.\ meadows in which the multiplicative cancellation axiom $$x\ne 0 \land x\cdot y = x\cdot z \Rightarrow y = z$$ or its flexible counterpart (where we substitute $0$ by an error term $e$) holds; or \emph{flexible} arithmetical meadows in the sense of \cite{BERGSTRA2011203}; or \emph{flexible} meadows of rational numbers (see e.g.\ \cite{10.1145/1219092.1219095}).

Are flexible arithmetical meadows, i.e.\ flexible meadows satisfying $(\AAA_1)$ and $(\AAA_2)$ (necessarily) connected with nonstandard models of arithmetic? As for the flexible meadows of rational numbers, are they a minimal algebra? If so, that might provide a connection with data types. Finally, the study of other algebraic concepts, such as morphisms and ideals which are paramount in many algebraic frameworks, are also natural lines of research in the context of flexible meadows. 

\subsection*{Acknowledgments}

 The first author acknowledges the support of FCT - Funda\c{c}\~ao para a Ci\^{e}ncia e Tecnologia under the projects: UIDP/04561/2020 and UIDP/04674/2020, and the research centers CMAFcIO -- Centro de Matem\'{a}tica, Aplica\c{c}\~{o}es Fundamentais e Investiga\c{c}\~{a}o Operacional and CIMA -- Centro de Investigação em Matemática e Aplicações. 

The authors are grateful to Imme van den Berg for valuable comments on a preliminary version of this paper.

\bibliographystyle{plain}
\bibliography{References}

\end{document}